\newtheorem{theorem}{Theorem}
\newtheorem{lemma}[theorem]{Lemma}
\newtheorem{cor}[theorem]{Corollary}
\newtheorem{prop}[theorem]{Proposition}
\newtheorem*{questionA}{Question A}
\newtheorem*{questionB}{Question B}
\theoremstyle{definition}
\newtheorem{remark}[theorem]{Remark}
\newtheorem{exm}[theorem]{Example}
\Crefname{theorem}{Theorem}{Theorems}
\Crefname{lemma}{Lemma}{Lemmata}
\Crefname{cor}{Corollary}{Corollaries}
\Crefname{prop}{Proposition}{Propositions}
\Crefname{defn}{Defintion}{Definitions}
\Crefname{remark}{Remark}{Remarks}
\Crefname{exm}{Example}{Examples}
\numberwithin{equation}{section} 
\numberwithin{theorem}{section}
\numberwithin{figure}{section}
\newcommand{\bN}{\mathbb{N}} 
\newcommand{\bQ}{\mathbb{Q}} 
\newcommand{\bZ}{\mathbb{Z}} 
\newcommand{\bC}{\mathbb{C}} 
\newcommand{\bR}{\mathbb{R}} 
\newcommand{\Per}{\operatorname{Preper}}
\renewcommand{\P}{\operatorname{Per}}
\@ifdefinable\@latex@chi{\let\@latex@chi\chi}
\renewcommand*\chi{{\@latex@chi\smash[t]{\mathstrut}}} 
\tikzset{/tikz/notestyleraw/.append style={text=black}}
\begin{document}

\title{Periodic intermediate $\beta$-expansions of Pisot numbers}

\author[B.\,Quackenbush]{Blaine Quackenbush$^{(1)}$}
\author[T.\,Samuel]{Tony Samuel$^{(2)}$}
\author[M.\,West]{Matt West$^{(3)}$}

\address{$^{(1)}$ Mathematics Department, California Polytechnic State University, CA, USA}
\address{$^{(2)}$ School of Mathematics, University of Birmingham, UK}
\address{$^{(3)}$ Department of Mathematics, University of California, Irvine, CA, USA\vspace{0.25em}}

\address{\parbox[t]{0.97\textwidth}{\textit{E-mail addresses}. \texttt{bquacken@calpoly.edu}, \texttt{t.samuel@bham.ac.uk}, \texttt{mawest@uci.edu}}\vspace{0.25em}}

\keywords{$\beta$-expansions/transformations, shifts of finite type, periodic points, iterated function systems.\vspace{0.5em}}

\subjclass[2010]{Primary: 37E05, 37B10; Secondary: 11A67, 11R06\vspace{0.25em}}

\thanks{\parbox[b]{0.96\textwidth}{\textit{Acknowledgements}. Part of the work presented here was carried out as an undergraduate research project with some of our students, notably the first and third authors.  We are extremely grateful to the Bill and Linda Frost Fund which partially supported this work.  The second author would also like to thank the Hausdorff Research Institute for Mathematics and Institut Mittag Leffler for their kind hospitality during the final writing stages of this article.}\hspace{-6.25em}}

\begin{abstract}
The subshift of finite type property (also known as the Markov property) is ubiquitous in dynamical systems {and the} simplest and most widely studied class of {dynamical systems are $\beta$-shifts, namely transformations of the form $T_{\beta, \alpha}  \colon x \mapsto \beta x + \alpha \bmod{1}$ acting on $[-\alpha/(\beta - 1), (1-\alpha)/(\beta - 1)]$}, where $(\beta, \alpha) \in \Delta$ is fixed and where  $\Delta \coloneqq \{ (\beta, \alpha) \in  \bR^{2} \colon \beta \in (1,2) \; \text{and} \; 0 \leq \alpha \leq 2-\beta \}$.  Recently, it was shown, by Li \textsl{et al.} (\textsl{Proc.\ Amer.\ Math.\ Soc.} 147(5):\ 2045--2055, 2019), that the set {of} $(\beta, \alpha)$ such that $T_{\beta, \alpha}$ has the subshift of finite type property is dense in the parameter space $\Delta$.  Here, they proposed the following question.  Given a fixed $\beta \in (1, 2)$ which is the $n$-th root of  a Perron number, does there exists a dense set of $\alpha$ {in the fiber $\{\beta\} \times (0, 2- \beta)$,} so that $T_{\beta, \alpha}$ has the subshift of finite type property?\\[0.5em]
\noindent We answer this question in the positive for a class of Pisot numbers.  Further, we investigate if this question holds true when replacing the subshift of finite type property by the property of beginning sofic (that is a factor of a subshift of finite).  In doing so we generalise, a classical result of Schmidt (\textsl{Bull.\ London Math.\ Soc.}, 12(4):\ 269--278, 1980) from the case when $\alpha = 0$ to the case when $\alpha \in (0, 2- \beta)$.  That is, we examine the structure of the set of eventually periodic points of $T_{\beta, \alpha}$ when $\beta$ is a Pisot number and when $\beta$ is the $n$-th root of a Pisot number.
\end{abstract}

\maketitle

\section{Introduction and Statement of Main Results}

\subsection{Introduction}

Since the pioneering work of R\'enyi \cite{R:1957} and Parry \cite{MR0142719}, {$\beta$-shifts} and expansions have been extensively studied and have provided practical solutions to various problems.  For instance, {they} arise as Poincar\'e maps of the geometric model of Lorenz differential equations \cite{MR681294}, and Daubechies \textsl{et al.} \cite{1011470} proposed a new approach to analog-to-digital conversion using $\beta$-expansion.  A summary of some further applications can be found in \cite{LSSW}.  Through their study, many new phenomena have appeared, revealing a rich combinatorial and topological structure, and unexpected connections to probability theory, ergodic theory, number theory and aperiodic order \cite{Komornik:2011,ArneThesis,MR2052279}.   Additionally, through understanding {$\beta$-shifts} and expansions, advances have been made in the theory of Bernoulli convolutions \cite{AFPK,MR3084706}.

For $\beta > 1$ and $x \in [0,1/(\beta-1)]$, a word $(\omega_{n})_{n \in \bN}$ in the alphabet $\{ 0, 1\}$ is called a $\beta$-\textsl{expansion} of $x$ if
\begin{align*}
x = \sum_{k= 0}^{\infty} \omega_{k+1} \ \beta^{-k}.
\end{align*}
When $\beta$ is a natural number, all but a countable set of real numbers have a unique \mbox{$\beta$-expansion}.  On the other hand, in \cite{MR1078082}, it was shown that, if $\beta$ is less than the golden mean, then for all $x \in (0, 1/(\beta-1))$, the cardinality of the set of $\beta$-expansions of $x$ is equal to the cardinality of the continuum.  Siderov \cite{Sidorov:2003} extended this result and showed that if $\beta$ is {strictly} less than two, then for Lebesgue almost all $x \in [0, 1/(\beta-1)]$, the cardinality of the set of $\beta$-expansions of $x$ equals the cardinality of the continuum.

Through iterating the maps $G_{\beta} \colon [0,1/(\beta-1)] \circlearrowleft$ and $L_{\beta} \colon [(\beta-2)/(\beta - 1), 1] \circlearrowleft$ defined by
	\begin{align*}
	G_{\beta}(x) \coloneqq 
	\begin{cases}
	\beta x  & \text{if} \; x < 1/\beta,\\
	\beta x -1 & \text{otherwise,}
	\end{cases}
%
\quad \text{and} \quad
%
	L_{\beta}(x)  \coloneqq 
	\begin{cases}
	\beta x + 2 - \beta  & \text{if} \; {x \leq 1 - 1/\beta,}\\
	\beta x + 1 - \beta & \text{otherwise.}
	\end{cases}
	\end{align*}
one obtains subsets of $\{0, 1\}^{\bN}$ known as the greedy and lazy $\beta$-shifts, respectively, where each point $\omega^{+}$ of the greedy $\beta$-shift and each point $\omega^{-}$ of the lazy $\beta$-shift corresponds to a $\beta$-expansion of a unique point in the interval $[0,1/(\beta-1)]$.  Note, if $\omega^{+}$ and $\omega^{-}$ are $\beta$-expansions of the same point, then $\omega^{+}$ and $\omega^{-}$ are not necessarily equal, see \Cref{ex:inbetween} and \cite{MR2299792,Komornik:2011,Komornik:1998}.

There are many ways, other than using the greedy and lazy $\beta$-shift, to generate a \mbox{$\beta$-expansion} of a real number. For instance, from intermediate $\beta$-shifts $\Omega_{\beta, \alpha}^{\pm}$, which arise from \textsl{intermediate $\beta$-transformations} $T^{\pm}_{\beta,\alpha} \colon [ -\alpha/(\beta - 1), (1-\alpha)/(\beta - 1) ] \circlearrowleft$, where $(\beta,\alpha) \in \Delta \coloneqq \{ (b, a) \in \mathbb{R}^{2} \colon b \in (1, 2) \; \text{and} \; a \in [0, 2 - \beta] \}$ and where $T_{\beta, \alpha}^{\pm}$ are defined as follows.  Letting $p = p_{\beta, \alpha} \coloneqq (1-\alpha)/\beta$ we set
\begin{align*}
T^{+}_{\beta, \alpha}(x) \coloneqq 
\begin{cases}
\beta x + \alpha & \text{if} \; x < p,\\
\beta x + \alpha - 1 & \text{otherwise},
\end{cases}
\quad \text{and} \quad
T^{-}_{\beta, \alpha}(x) \coloneqq 
\begin{cases}
\beta x + \alpha & \text{if} \; x \leq p,\\
\beta x + \alpha -  1 & \text{otherwise}.
\end{cases}
\end{align*}
The maps $T_{\beta, \alpha}^{\pm}$ are equal everywhere except at $p$ and $T^{-}_{\beta, \alpha} (x) = 1 - T_{\beta, 2 - \beta - \alpha}^+(1 - x)$. Notice, when $\alpha = 0$, the maps $G_{\beta}$ and $T^{+}_{\beta, \alpha}$ coincide, and when $\alpha = 2-\beta$, the maps $L_{\beta}$ and $T^{-}_{\beta, \alpha}$ coincide.  Further, observe that $-\alpha/(\beta - 1)$ and $(1-\alpha)/(\beta - 1)$ are fixed points for $T_{\beta, \alpha}^{\pm}$ and that {the unit interval} $[0, 1]$ is a trapping {region} for $T_{\beta, \alpha}^{\pm}$, meaning that if $x \in [0, 1]$, then ${(T_{\beta,\alpha}^{\pm})^{n}(x)}  \in [0, 1]$, for all $n  \in \bN$; and if $x \in ( -\alpha/(\beta - 1), 0) \cup (1, (1-\alpha)/(\beta - 1) )$, then there exists an $m \in \bN$ such that  {$(T_{\beta,\alpha}^{\pm})^{m}(x)  \in [0, 1]$}.

Each point in $\Omega_{\beta, \alpha}^{\pm}$ is a $\beta$-expansion of a unique point in $[0, 1/(\beta - 1)]$, see \eqref{diag:commutative2}, and $\Omega_{\beta, \alpha} \coloneqq \Omega_{\beta, \alpha}^{+} \cup \Omega_{\beta, \alpha}^{-}$ is a subshift, meaning that it is invariant under the (left) shift map $\sigma$ and closed in $\{ 0, 1 \}^{\bN}$, where we equip $\{ 0, 1 \}$ with the discrete topology and $\{ 0, 1 \}^{\bN}$ with the product topology.  The dynamical systems $(\Omega^{\pm}_{\beta, \alpha}, \sigma)$ and $([0, 1], T^{\pm}_{\beta, \alpha})$ are topologically conjugate,  that is they have `the same' dynamical properties.

Subshifts which can be completely described by a finite set of forbidden words are called \textsl{subshifts of finite type} (see \Cref{sec:Prelim-Subshift}) and play an essential {r\^ole} in the study of dynamical systems.  A reason why subshifts of finite type are so useful is that they have a simple representation as a finite directed graph.  Thus, {dynamical and combinatorical} questions about the subshift can be phrased in terms of an adjacency matrix making them much more tractable.  Hence, it is of interest to classify the set of $(\beta, \alpha) \in \Delta$ for which $\Omega_{\beta, \alpha}$ a subshift of finite type.  One of our aims is to give new insights towards such a classification.

Given $(\beta, \alpha) \in \Delta$, the unique points in $\Omega_{\beta, \alpha}^{+}$ and $\Omega_{\beta, \alpha}^{-}$ corresponding to $p$ are called the \textsl{kneading invariants} of $\Omega_{\beta, \alpha}$.  It is known that the kneading invariants completely determine $\Omega_{\beta, \alpha}$, see \Cref{omegastructure} due to \cite{BHV,H,HS}, and the {$\beta$-shift} $\Omega_{\beta, \alpha}$ is a subshift of finite type if and only if the left shift of the kneading invariants are periodic, see \Cref{Parry:GL} due to Ito and Takahashi \cite{MR0346134}, and Parry \cite{Parry:1979}, for the case $\alpha \in \{0, 2- \beta\}$, and Li \textsl{et al.} \cite{LSS}, for the case that $\alpha \in (0, 2- \beta)$.  These results immediately give us that the set of parameters in $\Delta$ which give rise to $\beta$-shifts of finite type is countable.  In a second article \cite{LSSW} by Li \textsl{et al.},  it was shown that this set of parameters is in fact dense in $\Delta$.  In contrast, if one considers the dynamical property of topologically transitivity, then the structure of the set of $(\beta, \alpha)$ in $\Delta$ such that $\Omega_{\beta, \alpha}$ is topologically transitive, with respect to the left shift map, is very different to the set of $(\beta, \alpha)$ belonging to $\Delta$ for which $\Omega_{\beta, \alpha}$ is a subshift of finite type.  It is worth noting that the former of these two sets has positive Lebesgue measure and is far from being dense in $\Delta$, see \Cref{thm:Palmer} due to Palmer \cite{Par:1979} and Glendinning \cite{G:1990}.

The results of \cite{MR1399483} and \cite{Lind:84} in tandem with those discussed above, yield the following.
	\begin{enumerate}[label={(\roman{enumi})}]
	\item If the $\beta$-shift $\Omega_{\beta, \alpha}$ is a subshift of finite type, then $\alpha \in \bQ(\beta)$. 
	\item If $\beta$ is not the positive $n^{\textup{th}}$-root of a Perron number, for some $n \in \bN$, then the set of $\alpha$ for which the $\beta$-shift $\Omega_{\beta, \alpha}$ is a subshift of finite type is empty.
	\end{enumerate}
Indeed, for $\Omega_{\beta, \alpha}$ to be a subshift of finite type, we require $\beta \in (1, 2)$ to be a maximal root of a polynomial with coefficients in $\{ -1, 0, 1 \}$ {and $\alpha \in \bQ(\beta)$}.  This leads to the following natural question, to which we give a partial answer to in \Cref{SFTdense}.  

\begin{questionA}\label{questionA}
If $\beta \in (1, 2)$ is a positive $n^{\textup{th}}$-root of a Perron number, for some $n \in \bN$, is the set of $\alpha$ for which $\Omega_{\beta, \alpha}$ is a subshift of finite type dense in $(0, 2 - \beta)$?
\end{questionA}

Another class of subshifts which is of interest here are those which are factors of a subshift of finite type.  Such subshifts are called \textsl{sofic}; indeed, every subshift of finite type is sofic, but not vise versa.  Kalle and Steiner \cite{KalleSteiner} proved that a $\beta$-shift $\Omega_{\beta, \alpha}$ is sofic if and only if its kneading invariants are eventually periodic.  Combining this result with those of Li \textsl{et al.} \cite{LSSW}, one obtains that the set of $(\beta, \alpha) \in \Delta$ for which $\Omega_{\beta, \alpha}$ is sofic is dense in $\Delta$.  This naturally leads to the study of (eventually) periodic points.

Bertrand \cite{MR0447134} and Schmidt  \cite{KS}, and subsequently Boyd \cite{MR1024551,MR1333306,MR1483916} and  Maia \cite{Maia_18}, addressed the following question.  For a fixed $\beta$, what are the values of $x \in [0, 1]$ which are eventually periodic under $G_{\beta}$? Recall, a point $x$ is eventually periodic under $G_{\beta}$ if and only if the cardinality of the set $\{ G_{\beta}^{n}(x) \colon n \in \bN \}$ is finite. Letting $\Per(\beta)$ denote the set of $x$ which are eventually periodic under $G_{\beta}$, Schmidt points out that if $x, y \in  \Per(\beta) \cap [0, 1]$, then there is no obvious reason why $x + y \bmod{1}$ should also be an element of  $\Per(\beta) \cap [0, 1]$. In view of this it is surprising that certain $\beta > 1$ behave exactly like integers, in the sense that if $\beta$ is a Pisot number, then $\Per(\beta) \cap [0, 1] = \bQ(\beta) \cap [0, 1]$.  A natural question  to ask here is:

\begin{questionB}\label{questionB}
What is the structure of the set $\Per^{\pm}(\beta, \alpha)$ of eventually periodic points under $T_{\beta, \alpha}^{\pm}$?
\end{questionB}

In \Cref{thm:thmB}, we show, if $\beta$ is a Pisot number and $\alpha \in \bQ(\beta) \cap (0, 2 - \beta)$, then $\Per^{\pm}(\beta,\alpha) = \bQ(\beta) \cap J_{\beta, \alpha}$, where $J_{\beta, \alpha}$ denotes the domain of $T_{\beta, \alpha}^{\pm}$.  We also obtain a partial converse and in \Cref{cor:sofic}, we relate these results back to Question A.

\subsection{Statement of main results}

Our main contributions in this article and to the story of periodic $\beta$-expansions, is to show the following results, namely \Cref{SFTdense,thm:thmB}, and \Cref{cor:sofic,thm:thmC,thm:thmD}.

For $\beta \in (1, 2)$ set $\Delta(\beta) \coloneqq \{ (\beta, \alpha) \in \mathbb{R}^{2} \colon 0 \leq \alpha \leq 2 - \beta \}$ and recall that the \textsl{multinacci number} $\beta_{m}$ \textsl{of order} $m \geq 2$ is the unique real solution to the equation $x^{m} = x^{m-1} + \dots + x + 1$ in the interval $(1,2)$. Note, the sequence $(\beta_m)_{m = 2}^{\infty}$ is strictly increasing and converges to $2$, and that $\beta_2$ is the golden mean.

\begin{theorem} \label{SFTdense}
Fix $m \geq 2$ an integer. The set of $(\beta_{m}, \alpha)$ in $\Delta(\beta_{m})$ with $\Omega_{\beta_{m}, \alpha}$ a subshift of finite type is dense in $\Delta(\beta_m)$.
\end{theorem}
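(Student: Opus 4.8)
The plan is to reduce the subshift of finite type property, in the fiber $\{\beta_{m}\}\times(0,2-\beta_{m})$, to the single requirement that $0$ be a periodic point of $T^{+}_{\beta_{m},\alpha}$, and then to deduce density of such $\alpha$ by a short boundedness argument. Fix the integer $m$, write $\beta=\beta_{m}$, $T^{\pm}_{\alpha}=T^{\pm}_{\beta,\alpha}$, and $p_{\alpha}=(1-\alpha)/\beta$; the endpoints $\alpha\in\{0,2-\beta\}$ give subshifts of finite type (classical for multinacci $\beta$, see \cite{MR0346134,Parry:1979}), so it suffices to prove density inside $(0,2-\beta)$. Everything below rests on the multinacci identity $\beta^{m+1}=2\beta^{m}-1$, equivalently $2-\beta=\beta^{-m}$, equivalently $\beta^{m}=(\beta^{m}-1)/(\beta-1)$.

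First I would set up the reduction. By \Cref{Parry:GL} the shift $\Omega_{\beta,\alpha}$ is a subshift of finite type if and only if the left shifts $\sigma\tau^{+},\sigma\tau^{-}$ of its kneading invariants are periodic; since $T^{+}_{\alpha}(p_{\alpha})=\beta p_{\alpha}+\alpha-1=0$ and $T^{-}_{\alpha}(p_{\alpha})=\beta p_{\alpha}+\alpha=1$, these are respectively the $T^{+}_{\alpha}$-itinerary of $0$ and the $T^{-}_{\alpha}$-itinerary of $1$. Using that $[0,1]$ is a trapping region and that $x_{j+1}=\beta x_{j}+\alpha-\varepsilon_{j}$ along the orbit $x_{j}=(T^{+}_{\alpha})^{j}(0)$, where $\varepsilon_{j}\in\{0,1\}$ is the $j$-th symbol, periodicity of the itinerary of $0$ forces $x_{j+q}-x_{j}=\beta^{j}(x_{q}-x_{0})$ and hence $x_{q}=0$; thus the $T^{+}_{\alpha}$-itinerary of $0$ is periodic exactly when $0$ is periodic under $T^{+}_{\alpha}$. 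So the goal of the reduction is the equivalence, for $\alpha\in(0,2-\beta)$, of ``$\Omega_{\beta,\alpha}$ is a subshift of finite type'' with ``$0$ is a periodic point of $T^{+}_{\alpha}$''; the implication ``$\Rightarrow$'' is already in hand, and the work is the converse.

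For the converse the multinacci identity enters through two ``first $m$ steps'' statements, valid for all $\alpha\in(0,2-\beta)$ and proved by short inductions: (a) $(T^{+}_{\alpha})^{j}(0)<p_{\alpha}$ for $0\le j\le m-1$ — the inequality to check being $\alpha<(\beta-1)/(\beta^{j+1}-1)$, whose right side is $\ge(\beta-1)/(\beta^{m}-1)=2-\beta$ — so that $(T^{+}_{\alpha})^{m}(0)=\alpha(\beta^{m}-1)/(\beta-1)$; and (b) $(T^{-}_{\alpha})^{j}(1)>p_{\alpha}$ for $0\le j\le m-1$, by the parallel computation whose constant term vanishes exactly at $j=m-1$, so that $(T^{-}_{\alpha})^{m}(1)=\beta^{m}+(\alpha-1)(\beta^{m}-1)/(\beta-1)=\alpha(\beta^{m}-1)/(\beta-1)$ after $\beta^{m}=(\beta^{m}-1)/(\beta-1)$. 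Hence the $T^{-}_{\alpha}$-orbit of $1$ reaches, after exactly $m$ steps, the point $(T^{+}_{\alpha})^{m}(0)$ on the orbit of $0$. Now suppose $0$ is periodic under $T^{+}_{\alpha}$, of minimal period $q$. By (a) one gets $q\ge m+1$, and since $T^{+}_{\alpha}$ takes the value $0$ on $[0,1]$ only at $p_{\alpha}$, minimality gives $(T^{+}_{\alpha})^{q-1}(0)=p_{\alpha}$ and that $p_{\alpha}$ is the only point of the cycle equal to $p_{\alpha}$; as $q\ge m+1$, the point $(T^{+}_{\alpha})^{m}(0)$ lies on the cycle, so iterating $T^{-}_{\alpha}$ forward from it follows the cycle (the two maps agree off $p_{\alpha}$) until $p_{\alpha}$ is hit, where $T^{-}_{\alpha}(p_{\alpha})=1$, and the orbit then repeats. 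Therefore the $T^{-}_{\alpha}$-itinerary of $1$ is periodic, i.e.\ $\sigma\tau^{-}$ is periodic, completing the equivalence. I expect the two inductions in (a)--(b) and the bookkeeping of the $\pm$ discontinuity at $p_{\alpha}$ to be the only genuinely delicate points.

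It remains to show $E:=\{\alpha\in(0,2-\beta):0\text{ is periodic under }T^{+}_{\alpha}\}$ is dense, and here I would argue by contradiction that its complement has empty interior. If some open interval $(c,d)\subseteq(0,2-\beta)$ were disjoint from $E$, then for every $\alpha\in(c,d)$ the orbit of $0$ under $T^{+}_{\alpha}$ would avoid $p_{\alpha}$ (equivalently, $0$ would fail to be periodic). An induction on $j$ then shows the $j$-th symbol of the itinerary of $0$ is the same for all $\alpha\in(c,d)$: granting it for $0,\dots,j-1$, the point $(T^{+}_{\alpha})^{j}(0)=\alpha(\beta^{j}-1)/(\beta-1)-c_{j}$ is affine and continuous in $\alpha$ on $(c,d)$ with a fixed $c_{j}\ge 0$, it never meets $p_{\alpha}=(1-\alpha)/\beta$, and $(c,d)$ is connected. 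But then $(T^{+}_{\alpha})^{j}(0)$ sweeps an interval of length $\tfrac{\beta^{j}-1}{\beta-1}\,|d-c|$ that must lie inside $[0,1]$ since $[0,1]$ is trapping, so $|d-c|\le(\beta-1)/(\beta^{j}-1)$ for every $j$, forcing $c=d$. This contradiction shows $E$ is dense in $(0,2-\beta)$, hence the set of $(\beta_{m},\alpha)\in\Delta(\beta_{m})$ with $\Omega_{\beta_{m},\alpha}$ a subshift of finite type is dense in $\Delta(\beta_{m})$.
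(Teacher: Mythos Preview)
Your proof is correct. The reduction step---showing that for multinacci $\beta_m$ the orbits of $0$ under $T^+_{\alpha}$ and of $1$ under $T^-_{\alpha}$ merge after $m$ steps---is the same matching statement as the paper's \Cref{+iff-} and \Cref{beginword} (there phrased as $(T^+_{\beta_m,\alpha})^{m+1}(p)=(T^-_{\beta_m,\alpha})^{m+1}(p)=\alpha\beta_m^m$), together with the same conclusion that periodicity of one kneading sequence forces periodicity of the other.

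Where your argument genuinely departs from the paper is in the density step. The paper transfers to uniform Lorenz maps and invokes the monotonicity/one-sided continuity of $q\mapsto\mu^{-}_{\beta,q}(q)$ (\Cref{tauinc}) to locate, via a supremum argument, a nearby $q_s$ with periodic kneading sequence; this works for any $\beta\in(1,2)$ and yields density of $\P^{\pm}(\beta)$ separately. You instead argue directly that the complement of $E$ has empty interior: on any interval $(c,d)$ where $0$ is never periodic, the itinerary of $0$ is constant in $\alpha$ (an intermediate-value argument, since hitting $p_\alpha$ would force periodicity), so $(T^+_\alpha)^j(0)$ is affine in $\alpha$ with slope $(\beta^j-1)/(\beta-1)$, and the trapping region $[0,1]$ forces $d-c\to 0$. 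This is more elementary and fully self-contained---no Lorenz conjugacy, no external monotonicity lemma---while the paper's route has the advantage of isolating the density of each $\P^{\pm}(\beta)$ as a standalone fact and fitting into a general kneading-theory framework. Both density arguments hold for arbitrary $\beta\in(1,2)$; the multinacci hypothesis enters only through the matching.
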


The main difficulty in proving \Cref{SFTdense} was in finding a way to compare the space $\Omega_{\beta_{m}, \alpha}$ and $\Omega_{\beta_{m}, \alpha'}$, for a fixed $m$ and $\alpha \neq \alpha'$.  We achieved this by embedding all $\beta_{m}$-transformations into a single (multi-valued) dynamical system and carrying out our analysis in this larger system.

This result answers Question A for the class of multinacci number which belong to the wider class of algebraic numbers known as Pis\^{o}t numbers.  Although many parts of our proof generalise from the class of multinacci numbers to the class of Pisot numbers, a central result (\Cref{+iff-}) which state that the upper kneading invariant is periodic if any only if the lower kneading invariant is periodic does not easily generalise, see \Cref{ex:+periodic-non-periodic} for an example of a point $(\beta, \alpha) \in \Delta$ where this is not the case.  Here we would like to mention that \Cref{+iff-} is closely related to the property known as matching, which has has been extensively studied \cite{KalleBruinCarminati,0951-7715-32-1-172}.

In the hope of circumventing this we turn our attention to Question B and examined the set of eventually periodic points under $T_{\beta, \alpha}^{\pm}$.

\begin{theorem}\label{thm:thmB}
Let $\beta \in (1,2)$ and $\alpha \in \bQ(\beta) \cap (0, 2-\beta)$ be fixed, and let $J_{\beta, \alpha}$ denote the domain of $T_{\beta, \alpha}^{\pm}$.
\begin{enumerate}[label={(\roman{enumi})}]
\item\label{thm:thmB1} If $\bQ \cap J_{\beta, \alpha} \subseteq \Per^{\pm}(\beta,\alpha)$, then $\beta$ is either a Pis{o}t or a Salem number.
\item\label{thm:thmB2} If $\beta$ is a Pisot number, then $\Per^{\pm}(\beta,\alpha) = \bQ(\beta) \cap J_{\beta, \alpha}$.
\end{enumerate}
\end{theorem}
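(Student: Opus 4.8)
The two parts call for quite different arguments, so I would treat them separately; throughout, for $x \in J_{\beta,\alpha}$ write $x_n \coloneqq (T^{\pm}_{\beta,\alpha})^n(x)$ and let $\epsilon_n = \epsilon_n(x) \in \{0,1\}$ be the digit selected at $x_n$, so that $x_{n+1} = \beta x_n + \alpha - \epsilon_n$ and hence $x_n = \beta^n x + \sum_{j=0}^{n-1} \beta^{n-1-j}(\alpha - \epsilon_j)$. Since $J_{\beta,\alpha} = [-\alpha/(\beta-1),(1-\alpha)/(\beta-1)]$ is bounded and forward invariant we have $\beta^{-n}x_n \to 0$, which gives the basic identity $x = \sum_{j\ge 0}\beta^{-j-1}(\epsilon_j-\alpha)$ for every $x$ in the domain. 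Every step below works verbatim for both $T^{+}_{\beta,\alpha}$ and $T^{-}_{\beta,\alpha}$. Part (ii) is the natural extension of Schmidt's Pisot theorem to $\alpha\neq 0$; part (i) is the converse, which I would obtain by a sequence of reductions.

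For part (ii), one inclusion is free and uses only $\alpha\in\bQ(\beta)$: if $x$ is eventually periodic there are $m<m'$ with $x_m = x_{m'}$, and writing out $x_{m'} = x_m$ gives $x_m(\beta^{m'-m}-1)\in\bQ(\beta)$, whence $x_m\in\bQ(\beta)$ and therefore $x = x_0 = \beta^{-m}\bigl(x_m - \sum_{j<m}\beta^{m-1-j}(\alpha-\epsilon_j)\bigr)\in\bQ(\beta)$. For the reverse inclusion I would follow Schmidt's argument, the point being that $\alpha-\epsilon_n$ is a bounded element of $\bQ(\beta)$ and so does not disturb the estimates. Given $x\in\bQ(\beta)\cap J_{\beta,\alpha}$, choose $q\in\bN$ with $qx, q\alpha\in\bZ[\beta]$; since $\beta$ is an algebraic integer, $\bZ[\beta]$ is a ring and the recursion $q x_{n+1} = \beta(qx_n) + q\alpha - q\epsilon_n$ keeps $(qx_n)_n$ inside the fixed finitely generated $\bZ$-module $\bZ[\beta]$. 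At the identity embedding $|x_n|\le\operatorname{diam}J_{\beta,\alpha}$ because the orbit stays in the domain, and for every other embedding $\sigma$ of $\bQ(\beta)$, applying $\sigma$ to the recursion and using $|\sigma(\beta)|<1$ (here, and only here, the Pisot hypothesis enters) gives $|\sigma(x_{n+1})|\le|\sigma(\beta)|\,|\sigma(x_n)| + 1 + |\sigma(\alpha)|$, so $|\sigma(x_n)|\le|\sigma(x)| + (1+|\sigma(\alpha)|)/(1-|\sigma(\beta)|)$ uniformly in $n$. Hence the Minkowski image of $\{qx_n:n\in\bN\}$ lies in a bounded region of the lattice $\bZ[\beta]$ and is therefore finite, so $x$ is eventually periodic and $\Per^{\pm}(\beta,\alpha) = \bQ(\beta)\cap J_{\beta,\alpha}$.

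For part (i), assume $\bQ\cap J_{\beta,\alpha}\subseteq\Per^{\pm}(\beta,\alpha)$. Since $\beta\in(1,2)$, the conclusion ``$\beta$ is Pis\^ot or Salem'' is exactly ``$\beta$ is an algebraic integer whose conjugates other than $\beta$ itself all lie in the closed unit disc,'' and I would reach this through three reductions, each carried out by exhibiting a rational point of $J_{\beta,\alpha}$ with infinite orbit. First, $\beta$ is algebraic: otherwise $\bQ(\beta)\cong\bQ(t)$, and writing $\alpha = U(\beta)/V(\beta)$ with $U,V\in\bQ[t]$ one checks by induction that the orbit of $x_0 = 1/2$ satisfies $x_n = A_n(\beta)/(2V(\beta)^n)$ with $A_n\in\bZ[t]$ and $\deg A_n$ strictly increasing, which (as $\beta$ is transcendental) forces the $x_n$ to be pairwise distinct, contradicting finiteness of the orbit. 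Second, $\beta$ is an algebraic integer: otherwise there is a finite place $v$ of $\bQ(\beta)$, over a rational prime $p$, with $v(\beta)<0$, and for $x_0 = p^{-N}$ with $N$ large the ultrametric inequality forces $v(x_n) = v(x_0) + n\,v(\beta)\to-\infty$ once $v(x_n)$ drops below $\min(0,v(\alpha)) + |v(\beta)|$, so the orbit is infinite. Third, no conjugate of $\beta$ has modulus $>1$: suppose $\sigma$ is an embedding of $\bQ(\beta)$ with $|\sigma(\beta)|>1$. For any rational $x\in J_{\beta,\alpha}$, finiteness of its orbit makes $\{\sigma(x_n)\}$ bounded, so dividing $\sigma(x_n) = \sigma(\beta)^n\sigma(x) + \sum_{j<n}\sigma(\beta)^{n-1-j}(\sigma(\alpha)-\epsilon_j)$ by $\sigma(\beta)^n$ and letting $n\to\infty$ gives $x = \sigma(x) = \sum_{j\ge0}\sigma(\beta)^{-j-1}(\epsilon_j(x)-\sigma(\alpha))$. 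The right-hand side depends continuously on the digit sequence (the series converges uniformly because $|\sigma(\beta)|>1$), and the itinerary map is continuous at the two fixed endpoints $-\alpha/(\beta-1)$ and $(1-\alpha)/(\beta-1)$, whose itineraries are $0^{\infty}$ and $1^{\infty}$; approximating these endpoints by rationals and passing to the limit then yields $-\sigma(\alpha)/(\sigma(\beta)-1) = -\alpha/(\beta-1)$ and $(1-\sigma(\alpha))/(\sigma(\beta)-1) = (1-\alpha)/(\beta-1)$, and subtracting gives $\sigma(\beta) = \beta$, a contradiction. Combining the three reductions with $\beta\in(1,2)$ shows $\beta$ is Pis\^ot or Salem.

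The pervasive but essentially harmless complication is the presence of $\alpha$: since $\alpha\in\bQ(\beta)$, the perturbation $\alpha-\epsilon_n$ never leaves $\bQ(\beta)$ and is uniformly bounded at every place, so all of Schmidt's archimedean and non-archimedean estimates go through. I expect the genuinely delicate points to be the two degenerate-base cases of part (i): in the transcendental case one must make the degree growth of $A_n$ and the resulting injectivity of $n\mapsto x_n$ airtight, in particular when $\deg V\ge 1$ so that several terms of the recursion compete for the top degree; and in the non-integral case one must choose $x_0$ and $N$ so that the $v$-adic valuation provably escapes to $-\infty$ rather than stabilising. The third reduction is conceptually the crux, as it is where a large conjugate of $\beta$ is ruled out, but it becomes clean once one isolates the elementary fact that the coding map of $T^{\pm}_{\beta,\alpha}$ is continuous at the two fixed points of the transformation, which I would record as a preliminary lemma (using \Cref{omegastructure}).
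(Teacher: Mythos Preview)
Your argument for part~(ii) is essentially the paper's: both bound the iterates $x_n$ at every embedding of $\bQ(\beta)$ (trivially at the identity, by the Pisot contraction at the others) and then invoke the finiteness of lattice points in a bounded region. The paper packages this via the coefficient vectors $\mathbf r^{(n,\pm)}$ and a Vandermonde-type matrix (\Cref{lemma1} and \Cref{lemma:tfae}), while you phrase it as a Minkowski-embedding argument, but the content is the same.

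For part~(i) your route genuinely differs from the paper's in two ways. First, the paper tacitly restricts to algebraic integers from the start of \Cref{sec:Proof_Thm_2_&_3} (``let $\beta\in(1,2)$ denote an algebraic integer with minimal polynomial \dots''), so your first two reductions---ruling out transcendental and non-integral $\beta$ via degree growth and a $p$-adic valuation, respectively---address cases the paper leaves implicit. Second, for the core step of excluding a conjugate $\gamma\neq\beta$ with $|\gamma|>1$, both arguments begin from the identity $x=\widehat\alpha/(1-\gamma)+\sum_{i\ge1}\omega_i^\pm(x)\,\gamma^{-i}$ for each rational $x$ with finite orbit (this is \Cref{lemma2} in the paper), but then diverge. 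The paper picks a point with two preimages $a<b$, approximates these by rationals $a',b'$ whose itineraries match those of $a,b$ for $m$ steps, and squeezes $|\beta^{-1}-\gamma^{-1}|$ below itself using the tails of the two series. You instead let rationals tend to the two fixed endpoints $-\alpha/(\beta-1)$ and $(1-\alpha)/(\beta-1)$, whose itineraries are the constant sequences $(\overline{0})$ and $(\overline{1})$; continuity of the coding map at these repelling fixed points lets you pass to the limit and read off $-\alpha/(\beta-1)=-\sigma(\alpha)/(\sigma(\beta)-1)$ and $(1-\alpha)/(\beta-1)=(1-\sigma(\alpha))/(\sigma(\beta)-1)$, whence $\sigma(\beta)=\beta$. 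Your version is shorter and avoids the auxiliary choice of $x,a,b$ and the $\delta$--$\eta$ bookkeeping; the paper's has the mild advantage of working entirely with a fixed pair of rationals inside the domain, so no limiting argument is required.
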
 
 
\Cref{thm:thmB}\,\ref{thm:thmB1} also hold when $J_{\beta, \alpha}$ is replaced by $[0, 1]$, since $[0, 1]$ is a trapping region for $T_{\beta, \alpha}$.

As indicated above,  \Cref{thm:thmB} generalises the  results of Schmidt \cite{KS}.  Indeed, our proof is motivated by that of \cite{KS}, with the following crucial difference.  In the setting of \cite{KS}, namely when $\alpha = 0$, a key fact that is used is to any point $x$ there exists a point $y$ arbitrarily close to $x$ and integers $m$  and $n$, such that $G^{n + k}_{\beta}(y)$ is arbitrarily close to zero for all $k \in \{ 0, 1, \dots, m \}$.  However, this is not the case, when $\alpha > 0$.  To circumvent this, we appeal to the kneading theory of Milnor and Thurston discussed in \Cref{sec:betashifts}.  We also remark that a similar question to Question B was consider by Baker  \cite{MR3683942}; via different methods to ours, and also Schmidt's, \Cref{thm:thmB}\,\ref{thm:thmB1} maybe concluded from the work of Baker and \Cref{thm:thmB}\,\ref{thm:thmB2} can be seen as a strengthening of Baker's results.

Further, as a consequence of \Cref{thm:thmB}\,\ref{thm:thmB2} and a result of \cite{KalleSteiner}, see \Cref{thm:sofic}, we obtain the following partial solution to Question A.

\begin{cor} \label{cor:sofic}
Let $\beta \in (1,2)$ be a Pisot number. The set of $(\beta, \alpha)$ in $\Delta(\beta)$ for which $\Omega_{\beta,\alpha}$ is sofic is dense in $\Delta(\beta)$.
\end{cor}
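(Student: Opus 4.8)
The plan is to combine \Cref{thm:thmB}\,\ref{thm:thmB2} with the characterization of sofic $\beta$-shifts via eventually periodic kneading invariants (the result of Kalle and Steiner \cite{KalleSteiner}, here \Cref{thm:sofic}), together with a density argument in the fiber $\{\beta\} \times (0, 2-\beta)$. Fix a Pisot number $\beta \in (1,2)$. First I would recall that for $(\beta, \alpha) \in \Delta$ the $\beta$-shift $\Omega_{\beta, \alpha}$ is sofic precisely when its kneading invariants are eventually periodic, and that the kneading invariants are the $\Omega_{\beta,\alpha}^{\pm}$-codings of the critical point $p = p_{\beta,\alpha} = (1-\alpha)/\beta$. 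Since eventual periodicity of a coding of a point $x$ under $T^{\pm}_{\beta,\alpha}$ is equivalent to $x$ being eventually periodic under $T^{\pm}_{\beta,\alpha}$ (i.e. having finite forward orbit), the point $(\beta, \alpha)$ gives a sofic $\beta$-shift as soon as $p_{\beta,\alpha} \in \Per^{\pm}(\beta, \alpha)$.

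Next I would restrict attention to $\alpha \in \bQ(\beta) \cap (0, 2-\beta)$. For such $\alpha$ one has $p_{\beta,\alpha} = (1-\alpha)/\beta \in \bQ(\beta)$, and $p_{\beta,\alpha}$ lies in $J_{\beta,\alpha}$ (indeed in $[0,1]$). Hence \Cref{thm:thmB}\,\ref{thm:thmB2}, which asserts $\Per^{\pm}(\beta,\alpha) = \bQ(\beta) \cap J_{\beta,\alpha}$, gives immediately that $p_{\beta,\alpha} \in \Per^{\pm}(\beta,\alpha)$; so both kneading invariants are eventually periodic and $\Omega_{\beta,\alpha}$ is sofic. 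Thus \emph{every} $\alpha \in \bQ(\beta) \cap (0, 2-\beta)$ yields a sofic $\beta$-shift.

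Finally I would invoke density: $\bQ(\beta)$ is dense in $\bR$ (it contains $\bQ$), so $\bQ(\beta) \cap (0, 2-\beta)$ is dense in $(0, 2-\beta)$, and therefore $\{\beta\} \times (\bQ(\beta) \cap (0,2-\beta))$ is dense in $\Delta(\beta) = \{\beta\} \times [0, 2-\beta]$; adding the endpoints $\alpha \in \{0, 2-\beta\}$ if one wishes does not affect density. This establishes that the set of $(\beta,\alpha) \in \Delta(\beta)$ for which $\Omega_{\beta,\alpha}$ is sofic is dense in $\Delta(\beta)$. There is essentially no obstacle here beyond bookkeeping: the only point requiring a moment's care is the translation between "$p_{\beta,\alpha}$ has finite $T^{\pm}_{\beta,\alpha}$-orbit" and "the kneading invariants are eventually periodic", which is standard, and the observation that $p_{\beta,\alpha} \in J_{\beta,\alpha} \cap \bQ(\beta)$ whenever $\alpha \in \bQ(\beta)$, which is immediate. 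The real content of the corollary is carried entirely by \Cref{thm:thmB}\,\ref{thm:thmB2}.
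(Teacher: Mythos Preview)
Your proof is correct and follows exactly the approach the paper indicates: the paper does not write out a separate proof of \Cref{cor:sofic}, but states explicitly that it is a consequence of \Cref{thm:thmB}\,\ref{thm:thmB2} together with the Kalle--Steiner characterization (\Cref{thm:sofic}), which is precisely the argument you spell out. Your bookkeeping---that $\alpha \in \bQ(\beta)$ forces $p_{\beta,\alpha} \in \bQ(\beta) \cap J_{\beta,\alpha}$, and that density of $\bQ(\beta)$ in $\bR$ finishes the job---is the intended mechanism.
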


In addition to this, combining the results of Palmer \cite{Par:1979} and Glendinning \cite{G:1990} as well as Parry \cite{Par:1979,MR0166332} with \Cref{SFTdense,thm:thmB}, we may 
\begin{enumerate}[label={(\roman{enumi})}]
\item determine a set of $\alpha$ which lie dense in a subset of positive Lebesgue measure of the fibre $\Delta(\beta_{m}^{1/n})$, for all integers $m$ and $n \geq  2$, and 
\item classify the set $\Per(\beta, \alpha)$, in the case that $\beta$ is the $n$-th root of a Pisot number and $T_{\beta, \alpha}$ is non-transitive.
\end{enumerate}
In  order to state these results we require a few preliminaries.

Let $n$ and $k \in \bN$ with $k < n$ and $\operatorname{gcd}(n, k) = 1$ be given, and let $s \in \{ 0, 1, \dots, k-1\}$ be such that $n = s \bmod{k}$.  For $j \in  \{ 1, 2, \dots, s \}$, define $V_{j}$ and $r_{j}$ by $jk = V_{j} s + r_{j}$, where $r_{j} \in \{ 0, 1, \dots, s - 1 \}$, and $h_{j}$ by $V_{j} = h_{1} + h_{2} + \dots + h_{j}$.  For $\beta \in (1, 2^{1/n}]$ set
	\begin{align}\label{eq:Ink}
	\begin{aligned}
	I_{n, 1}(\beta) &\coloneqq \left[ \frac{1}{\beta (\beta^{n-1} + \dots + 1)}, \frac{- \beta^{n+1} + \beta^{n} + 2\beta - 1}{\beta(\beta^{n-1} + \dots + 1)} \right],\; \text{and}\\[0.5em]
	I_{n, k}(\beta) &\coloneqq \left[ \frac{1 + \beta (\sum_{j=1}^{s} W_{j} - 1) }{\beta (\beta^{n-1} + \dots + 1)}, \frac{\beta(\sum_{j=1}^{s} W_{j}) - \beta^{n+1} + \beta^{n} + \beta - 1}{\beta(\beta^{n-1} + \dots + 1)} \right],
	\end{aligned}
	\end{align}
where, for $2 \leq j \leq s$,
\begin{align}\label{eq:wj}
W_{j} \coloneqq \sum_{i = 1}^{h_{j}} \beta^{(V_{s} - V_{j-1} - i)m + s - j}
\quad \text{and} \quad
W_{1} \coloneqq \sum_{i = 1}^{V_{1}} \beta^{(V_{s} - i)m + s - 1}
\end{align}
see \Cref{Fig:ParPlot} for a sketch of the intervals $I_{n, k}(\beta)$.  If $\beta = 2^{1/n}$, then $I_{n, k}(\beta)$ is a single point and, if $\beta \in (0, 2^{1/n})$, then $I_{n, k}(\beta)$ is an interval of positive Lebesgue measure.  Further, for a fixed $\beta \in (1, 2)$, in \cite{G:1990}, it was shown that the Lebesgue measure of
\begin{align*}
\left\{ \alpha \in (0, 2- \sqrt[l]{\beta}) \colon k \in \{ 1, \dots, l\} \; \text{with} \; \operatorname{gcd}(l, k) = 1 \; \text{and} \; \alpha \in I_{l, k}(\sqrt[l]{\beta}) \right\}
\end{align*}
remains bounded away from zero as $l \in \mathbb{N}$ tends to infinity.

\begin{cor}\label{thm:thmC}
Let $m$ and $n  \geq 2$ denote two natural numbers, and let $k \in \bN$ be such that $k < n$ and $\operatorname{gcd}(n, k) = 1$.  There exists a dense set of $\alpha$ in $I_{n, k}(\sqrt[n]{\beta_{m}})$ with $\Omega_{\sqrt[n]{\beta_{m}}, \alpha}$ a subshift of finite type.  Moreover, if $\beta$ is a Pisot number, then there exists a dense set of $\alpha$ in $I_{n, k}(\sqrt[n]{\beta})$ with $\Omega_{\sqrt[n]{\beta}, \alpha}$ sofic.
\end{cor}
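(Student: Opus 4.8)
The plan is to reduce the statement entirely to \Cref{SFTdense} and \Cref{cor:sofic} by means of a renormalisation argument, so that the $n$-th root only enters through a fixed block substitution on kneading invariants. The key ingredient, which goes back to Palmer and Glendinning, is a renormalisation correspondence attached to the windows $I_{n,k}(\beta)$: for $\beta\in(1,2^{1/n})$ and $k<n$ with $\operatorname{gcd}(n,k)=1$, and for every $\alpha\in I_{n,k}(\beta)$, the map $T_{\beta,\alpha}^{\pm}$ is non-transitive, and there is a closed subinterval $K=K_{\beta,\alpha}\subset J_{\beta,\alpha}$ such that the induced map $(T_{\beta,\alpha}^{\pm})^{n}|_{K}$ is affinely conjugate to an intermediate $\beta^{n}$-transformation $T_{\beta^{n},\widehat{\alpha}}^{\pm}$, where both the conjugating affine map and the new parameter $\widehat{\alpha}=\widehat{\alpha}_{n,k}(\alpha)$ depend continuously on $\alpha$. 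First I would make this precise and check, using the endpoint formulas \eqref{eq:Ink}--\eqref{eq:wj}, that $\alpha\mapsto\widehat{\alpha}$ is a homeomorphism of $I_{n,k}(\beta)$ onto the fibre $\{\beta^{n}\}\times[0,2-\beta^{n}]$. The conjugacy transports the $T_{\beta,\alpha}^{\pm}$-orbit of the discontinuity $p_{\beta,\alpha}$ to the $T_{\beta^{n},\widehat{\alpha}}^{\pm}$-orbit of $p_{\beta^{n},\widehat{\alpha}}$ after a fixed, non-erasing finite block substitution $\zeta=\zeta_{n,k}$ on $\{0,1\}^{\bN}$; hence the two kneading invariants of $\Omega_{\beta,\alpha}$ are the $\zeta$-images of the two kneading invariants of $\Omega_{\beta^{n},\widehat{\alpha}}$.

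Next I would observe that $\zeta$ preserves the combinatorial properties we care about. Since $\zeta$ is a fixed non-erasing block substitution applied to each kneading invariant, the left shift of $\zeta(\omega)$ is periodic (respectively eventually periodic) if and only if the left shift of $\omega$ is; this is a routine check, since a periodic tail of $\omega$ produces a periodic tail of $\zeta(\omega)$ and conversely the uniform block length of $\zeta$ lets one recover periodicity of $\omega$ from that of $\zeta(\omega)$. Combining this with \Cref{Parry:GL} (that $\Omega_{\beta,\alpha}$ is a subshift of finite type precisely when the left shifts of its kneading invariants are periodic) and \Cref{thm:sofic} (that $\Omega_{\beta,\alpha}$ is sofic precisely when they are eventually periodic), I obtain the transfer principle: $\Omega_{\beta,\alpha}$ is a subshift of finite type (respectively sofic) if and only if $\Omega_{\beta^{n},\widehat{\alpha}}$ is.

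It then remains to feed in the density statements. For the first assertion take $\beta=\sqrt[n]{\beta_{m}}$, so $\beta^{n}=\beta_{m}$; by \Cref{SFTdense} the set of $\widehat{\alpha}$ for which $\Omega_{\beta_{m},\widehat{\alpha}}$ is a subshift of finite type is dense in $[0,2-\beta_{m}]$, and pulling this set back through the homeomorphism $\widehat{\alpha}_{n,k}^{-1}$ yields a dense subset of $I_{n,k}(\sqrt[n]{\beta_{m}})$ on which $\Omega_{\sqrt[n]{\beta_{m}},\alpha}$ is a subshift of finite type. For the second assertion take $\beta$ Pisot; by \Cref{cor:sofic} the set of $\widehat{\alpha}$ with $\Omega_{\beta,\widehat{\alpha}}$ sofic is dense in $[0,2-\beta]$, and pulling it back through the same homeomorphism gives a dense set of $\alpha$ in $I_{n,k}(\sqrt[n]{\beta})$ with $\Omega_{\sqrt[n]{\beta},\alpha}$ sofic. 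The boundary case $\beta=2^{1/n}$, where $I_{n,k}(\beta)$ degenerates to a single point, is vacuous.

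The hard part will be establishing the renormalisation correspondence in the exact form required: identifying the interval $K_{\beta,\alpha}$ and the block substitution $\zeta_{n,k}$, verifying that $\alpha\mapsto\widehat{\alpha}$ is onto the \emph{entire} fibre (so that density transfers in both directions, not just into a proper subinterval), and handling the $+$ and $-$ versions of $T_{\beta,\alpha}$ uniformly, including the endpoints of $I_{n,k}(\beta)$ that correspond to $\widehat{\alpha}\in\{0,2-\beta^{n}\}$. The formulas \eqref{eq:Ink}--\eqref{eq:wj} are precisely what is needed to pin down the images of these endpoints and hence the surjectivity of the parameter map; the remaining work is bookkeeping on the kneading sequences together with the combinatorial characterisations already available in the paper.
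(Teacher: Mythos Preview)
Your proposal is correct and follows essentially the same route as the paper. The paper's argument in \Cref{sec:transitivity} invokes precisely the Palmer--Glendinning renormalisation correspondence you describe---the affine conjugacy between $(T^{\pm}_{\sqrt[n]{\beta},a})^{n}$ restricted to a subinterval and $T^{\pm}_{\beta,\widehat{\alpha}}$, together with the bijectivity of $\alpha\mapsto\widehat{\alpha}$ between $I_{n,k}(\sqrt[n]{\beta})$ and $[0,2-\beta]$---and then states that \Cref{thm:thmC} follows directly from \Cref{SFTdense} and \Cref{cor:sofic}; your block-substitution description of how the kneading invariants transfer, and the appeal to \Cref{Parry:GL} and \Cref{thm:sofic}, simply makes explicit what the paper leaves implicit.
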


Before stating our final corollary we require one last preliminary.  For $(\beta, \alpha) \in \Delta$, Parry \cite{MR0166332} constructed an absolutely continuous $T_{\beta, \alpha}$-invariant probability measure, which we denote by $\nu_{\beta, \alpha}$, and in \cite{MR0386019}, it was verified that the density $h_{\beta, \alpha}$ is always non-negative.  Hofbauer \cite{H,MR570882,MR599481} showed that this measure is ergodic and maximal, and a direct consequence of \cite{Par:1979} and \cite{G:1990} is that $\nu_{\beta, \alpha}$ has support equal to $[0, 1]$ if and only if $T_{\beta, \alpha}$ is topologically transitive.

\begin{cor}\label{thm:thmD}
Let $n$ and $k \in \bN$, let $\beta$ denote a Pisot number and let $\alpha \in \bQ(\beta) \cap [0, 2 - \beta]$. Defining $\alpha_{n, k} = \alpha_{n, k}(\beta, \alpha) \in I_{n, k}(\sqrt[n]{\beta})$ by
	\begin{align*}
	\alpha_{n, 1} \coloneqq \frac{((1 - \alpha)(1 - \sqrt[n]{\beta^{-1}}) - 1)(1 - \sqrt[n]{\beta})}{\beta - 1}
	\quad \text{and} \quad
	\alpha_{n, k} \coloneqq \frac{((1 - \alpha)(1 - \sqrt[n]{\beta^{-1}}) - \sum_{j  = 1}^{s} W_{j})(1 - \sqrt[n]{\beta})}{\beta - 1},
	\end{align*}
where $s \in \{ 0, 1, \dots, k-1\}$ satisfies $n = s \bmod{k}$ and $W_{j}$ is as in \eqref{eq:wj}, and setting $\Phi(x) \coloneqq (\sqrt[n]{\beta} - 1)x + \alpha_{n, k}$, we have
	\begin{align*}
	\Per^{\pm}(\sqrt[n]{\beta}, \alpha_{n, k}) \cap \operatorname{supp}(\nu_{\sqrt[n]{\beta}, \alpha_{n, k}}) = \bigcup_{i = 0}^{n-1} (T_{\sqrt[n]{\beta}, \alpha_{n, k}}^{\pm})^{i} (\Phi( \bQ(\beta) \cap [0, 1]) ).
	\end{align*}
\end{cor}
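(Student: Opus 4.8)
The plan is to exhibit $T_{\sqrt[n]{\beta},\alpha_{n,k}}^{\pm}$ as a period-$n$ renormalisation of the Pisot transformation $T_{\beta,\alpha}^{\pm}$ and then to transport \Cref{thm:thmB}\,\ref{thm:thmB2} across this renormalisation. Throughout I set $\beta'\coloneqq\sqrt[n]{\beta}$, $a\coloneqq\alpha_{n,k}$ and $A\coloneqq\bQ(\beta)\cap[0,1]$, and recall that $\Phi(x)=(\beta'-1)x+a$ is an increasing affine contraction.

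The first step, which I expect to be the main obstacle, is to make the renormalisation precise. Since $a\in I_{n,k}(\beta')$, the description of non-transitive intermediate $\beta$-transformations of Palmer \cite{Par:1979}, Glendinning \cite{G:1990} and Parry \cite{MR0166332} applies: with $Y_0\coloneqq\Phi([0,1])$ and $Y_i\coloneqq(T_{\beta',a}^{\pm})^{i}(Y_0)$ for $0\le i\le n-1$, the intervals $Y_0,\dots,Y_{n-1}$ have pairwise disjoint interiors, $T_{\beta',a}^{\pm}$ maps $Y_i$ onto $Y_{i+1\bmod n}$, the first-return map $R\coloneqq(T_{\beta',a}^{\pm})^{n}|_{Y_0}$ has constant branch slope $(\beta')^{n}=\beta$, and $\operatorname{supp}(\nu_{\beta',a})$ is the union of the sets $(T_{\beta',a}^{\pm})^{i}(\Phi(\operatorname{supp}(\nu_{\beta,\alpha})))$. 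The heart of the matter — and the origin of the explicit formula for $\alpha_{n,k}$ — is that $\Phi$ conjugates $R$ to the restriction of $T_{\beta,\alpha}^{\pm}$ to its trapping region $[0,1]$, with translation parameter exactly the given $\alpha$, i.e.\ $\Phi^{-1}\circ R\circ\Phi=T_{\beta,\alpha}^{\pm}|_{[0,1]}$. Verifying this is a finite computation: one follows the itinerary through the cycle $Y_0\to Y_1\to\dots\to Y_{n-1}\to Y_0$, recording in which $Y_i$ the discontinuity $p_{\beta',a}$ lies and with what combinatorics — this is precisely what the integers $V_j,r_j,h_j$ and the sums $W_j$ of \eqref{eq:wj} encode — and imposing the resulting branch identities for $R=\Phi\circ T_{\beta,\alpha}^{\pm}\circ\Phi^{-1}$ (for instance $R(\Phi(0))=\Phi(\alpha)$, together with the requirement that $\Phi(p_{\beta,\alpha})$ be the break point of $R$) leaves a single linear equation for $a$ whose solution is the stated $\alpha_{n,k}$.

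Granting Step 1, the transfer on the base interval is routine. Fix $z\in Y_0$. Since $T_{\beta',a}^{\pm}$ carries $Y_i$ onto $Y_{i+1\bmod n}$, the forward orbit of $z$ is the union of the $n$ sets $(T_{\beta',a}^{\pm})^{r}\big(\{R^{j}z:j\ge0\}\big)$ with $0\le r<n$; hence $z$ is eventually periodic under $T_{\beta',a}^{\pm}$ if and only if it is eventually periodic under $R$, hence (by the conjugacy) if and only if $\Phi^{-1}(z)$ is eventually periodic under $T_{\beta,\alpha}^{\pm}|_{[0,1]}$, hence (since $[0,1]$ is a trapping region) if and only if $\Phi^{-1}(z)\in\Per^{\pm}(\beta,\alpha)$. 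As $\beta$ is a Pisot number and $\alpha\in\bQ(\beta)$, \Cref{thm:thmB}\,\ref{thm:thmB2} when $\alpha\in(0,2-\beta)$, and Schmidt's theorem \cite{KS} together with the symmetry $T_{\beta,\alpha}^{-}(x)=1-T_{\beta,2-\beta-\alpha}^{+}(1-x)$ when $\alpha\in\{0,2-\beta\}$, give $\Per^{\pm}(\beta,\alpha)=\bQ(\beta)\cap J_{\beta,\alpha}$; and since $\Phi^{-1}(z)\in[0,1]\subseteq J_{\beta,\alpha}$ this is equivalent to $\Phi^{-1}(z)\in A$. Therefore $\Per^{\pm}(\beta',a)\cap Y_0=\Phi(A)$.

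Finally I would spread this equality around the cycle. Note first that $T_{\beta,\alpha}^{\pm}$ maps $[0,1]$ into itself and $\bQ(\beta)$ into itself (as $\beta,\alpha\in\bQ(\beta)$ and $\bQ(\beta)$ is a field), so $T_{\beta,\alpha}^{\pm}(A)\subseteq A$; conversely, if $x\in[0,1]$ with $T_{\beta,\alpha}^{\pm}(x)\in\bQ(\beta)$ then $\beta x\in\bQ(\beta)$, so $x\in A$. For the inclusion $\supseteq$: given $a'\in A$ and $0\le i<n$, Step 2 gives $\Phi(a')\in\Per^{\pm}(\beta',a)\cap Y_0$, so $(T_{\beta',a}^{\pm})^{i}(\Phi(a'))$ is eventually periodic and lies in $Y_i\subseteq\operatorname{supp}(\nu_{\beta',a})$. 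For the inclusion $\subseteq$: let $x\in\operatorname{supp}(\nu_{\beta',a})$ be eventually periodic, choose $i$ with $x\in Y_i$, and put $y\coloneqq(T_{\beta',a}^{\pm})^{n-i}(x)\in Y_0$; then $y$ is eventually periodic, so $y=\Phi(a')$ for some $a'\in A$ by Step 2. Pick $w\in Y_0$ with $(T_{\beta',a}^{\pm})^{i}(w)=x$ (possible since $Y_i=(T_{\beta',a}^{\pm})^{i}(Y_0)$); then $R(w)=(T_{\beta',a}^{\pm})^{n-i}(x)=\Phi(a')$, hence $T_{\beta,\alpha}^{\pm}(\Phi^{-1}(w))=a'\in\bQ(\beta)$ with $\Phi^{-1}(w)\in[0,1]$, whence $\Phi^{-1}(w)\in A$ and $x=(T_{\beta',a}^{\pm})^{i}(w)\in(T_{\beta',a}^{\pm})^{i}(\Phi(A))$. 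This yields the claimed equality. (Strictly, the argument uses $\operatorname{supp}(\nu_{\beta',a})=\bigcup_{i}(T_{\beta',a}^{\pm})^{i}(\Phi([0,1]))$, which holds exactly when $\operatorname{supp}(\nu_{\beta,\alpha})=[0,1]$, i.e.\ when $T_{\beta,\alpha}^{\pm}$ is topologically transitive — the relevant case; otherwise the same reasoning, with $[0,1]$ replaced by $\operatorname{supp}(\nu_{\beta,\alpha})$, gives the identity with $A$ replaced by $A\cap\operatorname{supp}(\nu_{\beta,\alpha})$.)
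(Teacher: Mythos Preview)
Your proposal is correct and follows essentially the same route as the paper: use the Palmer--Glendinning renormalisation to conjugate $(T_{\sqrt[n]{\beta},\alpha_{n,k}}^{\pm})^{n}$ on the base interval $\Phi([0,1])$ with $T_{\beta,\alpha}^{\pm}|_{[0,1]}$, invoke \Cref{thm:thmB}\,\ref{thm:thmB2} (together with Schmidt's result at the endpoints $\alpha\in\{0,2-\beta\}$) on the Pisot base, and then propagate around the $n$-cycle using the description of $\operatorname{supp}(\nu_{\sqrt[n]{\beta},\alpha_{n,k}})$. The paper's own proof is a one-line reference to precisely these ingredients, so your write-up is simply a fleshed-out version of the intended argument; your parenthetical caveat about the non-transitive case for $(\beta,\alpha)$ is a careful point the paper glosses over.
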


\subsection{Outline} 

In \Cref{sec:Prelim} we give necessary definitions and results we require in our proofs of \Cref{SFTdense,thm:thmB}.  \Cref{sec:Proof_Thm_1,sec:Proof_Thm_2_&_3} are dedicated to proving \Cref{SFTdense,thm:thmB}, respectively.  We conclude with \Cref{sec:transitivity}.  The aim of this final section is to provide an overview of the results of \cite{G:1990,Par:1979,MR0166332,Parry:1979} which in combination with our results (\Cref{SFTdense,thm:thmB}) yields \Cref{thm:thmC,thm:thmD}.

\section{Preliminaries}\label{sec:Prelim}

We divide this section into three parts: \Cref{sec:Prelim-Subshift,sec:betashifts} in which we discuss aspects of symbolic dynamics and $\beta$-shifts; and \Cref{sec_uniform} where we review results concerning a related class of interval maps, namely uniform Lorenz maps, which are in essence scaled versions of $\beta$-transformations.

\subsection{Subshifts}\label{sec:Prelim-Subshift}

We equip the set $\{0,1\}^\bN$ of infinite words with the topology induced by the ultra metric $\mathscr{D} \colon \{0,1\}^\bN \times \{0,1\}^\bN \to \mathbb{R}$ defined by
\begin{align*}
\mathscr{D}(\omega, \nu) = 
\begin{cases}
0 & \text{if} \; \omega = \nu,\\
2^{- \lvert  \omega \wedge \nu \rvert + 1} & \text{if} \; \omega \neq \nu,
\end{cases}
\end{align*}
where $\lvert \omega \wedge \nu \rvert \coloneqq \min \{ i \in \bN \colon \omega_i \neq \nu_i \}$, for $\omega = (\omega_1, \omega_2, \dots)$ and $\nu = (\nu_1, \nu_2, \dots)$. This topology coincides with the product topology on $\{0,1\}^\bN$, where $\{0,1\}$ is endowed with the discrete topology. For $n \in \bN$ and $\omega \in \{0,1\}^\bN$, we set $\omega\vert_n = (\omega_1, \dots, \omega_n)$ and call $n$ the \textsl{length} of $\omega\vert_n$ denoted by $\lvert \omega\vert_n \rvert$.  We define the (\textsl{left}) \textsl{shift} $\sigma$ on $\{0,1\}^\bN$ by $\sigma(\omega_1, \omega_2, \dots) \coloneqq (\omega_2, \omega_3, \dots)$. A closed subspace $\Omega$ of $\{0,1\}^\bN$ is a \textsl{subshift} if $\Omega$ is invariant under $\sigma$, namely $\sigma( \Omega) \subseteq \Omega$. Given a subshift $\Omega$ and a natural number $n$, we set
\begin{align*}
\Omega\vert_n \coloneqq \{(\xi_1, \xi_2, \dots, \xi_n) \in \{0,1\}^n \colon \text{there exists} \; \omega \in \Omega \; \text{with} \; \omega\vert_{n} = (\xi_1, \xi_2, \dots, \xi_n)\}
\end{align*}
and denote by $\Omega^* \coloneqq \bigcup_{n \in \bN} \Omega\vert_n$ the collection of all finite words.  A subshift $\Omega$ is said to be \textsl{of finite type} if there exists a finite set $F$ of finite words such that 
	\begin{enumerate}[label={(\roman{enumi})}]
	\item $\nu\vert_{n} \not\in  F$ for all $\nu \in \Omega$ and $n \in \bN$;
	\item if $\nu \in \{ 0, 1\}^{\bN} \setminus \Omega$, then there exist integers $n > 0$ and $m \geq  0$ such that $\sigma^{m}(\nu)\vert_{n} \in F$.
	\end{enumerate}
The set $F$ is often referred to as the \textsl{set of forbidden words} of $\Omega$.  If $\Omega \subseteq \{ 0, 1\}^{\bN}$ is a factor of a subshift of finite type, then it is called \textsl{sofic}.

A word $\omega$ is  \textsl{periodic} with period $n \in \bN$, if $(\omega_1, \dots, \omega_n) = (\omega_{(m-1)n+1}, \dots, \omega_{mn})$ for all $m \in \bN$; in which case we write $\omega = (\overline{\omega_1, \dots, \omega_n})$. The smallest such $n$ is called the period of $\omega$.   Similarly, a word $\omega$ is called \textsl{pre-periodic} with period $n \in \bN$, if there exists $k \in \bN$ with $(\omega_{k + 1}, \dots, \omega_{k + n}) = (\omega_{k + (m-1)n+1}, \dots, \omega_{k + mn})$ for all $m \in \bN$; in which case we write $\omega= (\omega_1, \dots, \omega_k, \overline{\omega_{k+1}, \dots, \omega_{k+n}})$.  As with periodic words, the smallest such $n$ is called the period of $\omega$.

\subsection{Intermediate $\beta$-shifts and expansions}\label{sec:betashifts}

Let $(\beta, \alpha) \in \Delta$ be fixed and set $p = p_{\beta,\alpha} \coloneqq (1 - \alpha)/\beta$. Let $\tau_{\beta, \alpha}^{\pm} \colon J_{\beta, \alpha} \to \{ 0, 1 \}^{\bN}$ be defined by 
\begin{align*}
\tau_{\beta,\alpha}^\pm(x) \coloneqq (\omega_1^\pm(x), \omega_2^\pm(x), \dots),
\end{align*}
where, for $n \in \bN$,
\begin{align*}
\omega_n^+(x) \coloneqq 
\begin{cases}
0 & \text{if} \; (T_{\beta,\alpha}^+)^{n-1}(x) < p,\\
1 & \text{otherwise,} 
\end{cases}
\quad \text{and} \quad
\omega_n^-(x) = 
\begin{cases}
0 & \text{if} \; (T_{\beta,\alpha}^-)^{n-1}(x) \leq p, \\
1 & \text{otherwise.} 
\end{cases}
\end{align*}
We refer to $\tau_{\beta, \alpha}^{\pm}$ as \textsl{expansion maps}.  The image of $J_{\beta, \alpha}$ under $\tau_{\beta,\alpha}^\pm$ is denoted by $\Omega_{\beta,\alpha}^\pm$, and we set $\Omega_{\beta, \alpha} \coloneqq \Omega_{\beta,\alpha}^+ \bigcup \Omega_{\beta,\alpha}^-$.   We call $\tau_{\beta,\alpha}^+(p)$ the \textsl{upper} and $\tau_{\beta,\alpha}^-(p)$ the \textsl{lower kneading invariant} of $\Omega_{\beta,\alpha}$.

\begin{remark}\label{criticalitinerarybegin}
Let $\omega = (\omega_1, \omega_2, \dots)$ and $\nu=(\nu_1, \nu_2, \dots)$ respectively denote the upper and the lower kneading invariant of $\Omega_{\beta, \alpha}$. By definition, $\omega_1 = \nu_2 = 0$ and $\omega_2 = \nu_1 = 1$. It can also be shown, for $k \geq 2$ an integer, that $(\omega_k, \omega_{k+1}, \dots) = (\overline{1})$ if and only if $\alpha = 2-\beta$, and that $(\nu_k, \nu_{k+1}, \dots) = (\overline{0})$ if and only if $\alpha = 0$.
\end{remark}

The inverse map $\pi_{\beta, \alpha}\colon \{0,1\}^\bN \to J_{\beta, \alpha}$ of $\tau_{\beta, \alpha}^{\pm}$ is called the \textsl{projection map} and defined by 
\begin{align*}
\pi_{\beta, \alpha}(\omega_{1}, \omega_{2}, \dots) \coloneqq \alpha(1-\beta)^{-1} + \sum_{i=1}^{\infty} \omega_i\beta^{-i}.
\end{align*}
An important property of $\tau_{\beta, \alpha}^{\pm}$ and $\pi_{\beta, \alpha}$ is that the following diagram commutes.
	\begin{align}\label{diag:commutative2}
	\begin{aligned}
	\xymatrix@C+2pc{
	\Omega^{\pm}_{\beta, \alpha}
	\ar@/_/[d]_{\pi_{\beta, \alpha}}
	\ar[r]^{\sigma} & 
	\Omega_{\beta, \alpha}^{\pm}
	\ar@/^/[d]^{\pi_{\beta, \alpha}} \\
	{J_{\beta, \alpha}}
	 \ar@/_/[u]_{\tau_{\beta,\alpha}^{\pm}}
	\ar[r]_{T^{\pm}_{\beta, \alpha}}  & 
	 \ar@/^/[u]^{\tau_{\beta,\alpha}^{\pm}}
	J_{\beta, \alpha}}
	\end{aligned}
	\end{align}
This result is readily verifiable from the definitions of the maps involved, see \cite{BHV}. From this, one may deduce, for $x \in [0, 1 + 1/(\beta-1)]$, that the words $\tau_{\beta,\alpha}^\pm(x - \alpha /(\beta - 1))$ are $\beta$-expansions of $x$.  It is worth noting that the expansion of a point $x$ given by $\tau_{\beta,0}^{+}(x)$, namely the greedy $\beta$-expansion of $x$, is lexicographically the largest $\beta$-expansion of $x$, and the expansion given by $\tau_{\beta,2-\beta}^-(x - (2 - \beta) /(\beta - 1))$, namely the lazy $\beta$-expansion of $x$, is lexicographically the smallest $\beta$-expansion of $x$, see \cite{MR1078082}.  Further, for Lebesgue all most all $x$, the expansion $\tau_{\beta,\alpha}^\pm(x - \alpha /(\beta - 1))$ lie in between the greedy and the lazy $\beta$-expansions of $x$, with respect to the lexicographic ordering, see \cite{DK:2002b}. There also exist $\beta$ such that the only $\beta$-expansion of one is the greedy $\beta$-expansion, such $\beta$ are called \textsl{univoque}, see \cite{Komornik:1998} for further details.

\begin{exm}\label{ex:inbetween}
For $\beta = (1 + \sqrt{5})/2$ and $x = 1$, we have the following.
	\begin{alignat*}{3}
	&\tau^{+}_{\beta, 0}(x) = (1, 1, \overline{0})							&\qquad	&\text{greedy golden mean expansion of} \; 1\\
	&\tau^{\pm}_{\beta, 1 - \beta/2}(x - (1-\beta/2)/(\beta-1)) = (\overline{1, 0})		&		&\text{symmetric golden mean expansion of} \; 1\\
	&\tau^{-}_{\beta, 2 - \beta}(x - (2-\beta)/(\beta-1)) = (0,\overline{1})			&		&\text{lazy golden mean expansion of} \; 1
	\end{alignat*}
For $\beta$ the largest positive real root of $z^{14} - 2 z^{13} + z^{11} - z^{10} - z^{7} + z^{6} - z^{4} + z^{3} - z + 1$, and $x = 1$,
	\begin{align*}
	\tau^{\pm}_{\beta, \alpha}(x - \alpha/(\beta-1)) = (1,1,1,0,0,1,0,1,1, \overline{1,0,0,1,0,1,0}),
	\end{align*}
for all $\alpha \in [0, 2-\beta]$. In \cite{MR2299792}, it was shown, in this latter case, that $\beta$ is the smallest univoque Pisot number.
\end{exm}
 
Next, we recall a result which shows that $\Omega_{\beta,\alpha}^\pm$ is completely determined its kneading invariants.

\begin{theorem} \cite{BHV,H,HS} \label{omegastructure}
Letting $\prec, \preceq, \succ, \succeq$ denote the lexicographic orderings on $\{0,1\}^\bN$, we have that
\begin{align*}
\Omega_{\beta,\alpha}^+ &= \left\{ \omega \in \{0,1\}^\bN \colon \text{for all} \; n \in \bN_0, \; \sigma^n(\omega) \prec \tau_{\beta, \alpha}^-(p) \; \text{or} \; \tau_{\beta, \alpha}^+(p) \preceq \sigma^n(\omega) \right\},\\
\Omega_{\beta,\alpha}^- &= \left\{ \omega \in \{0,1\}^\bN \colon \text{for all} \; n \in \bN_0, \; \sigma^n(\omega) \preceq \tau_{\beta, \alpha}^-(p) \; \text{or} \; \tau_{\beta, \alpha}^+(p) \prec \sigma^n(\omega) \right\}.
\end{align*}
\end{theorem}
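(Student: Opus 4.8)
The plan is to prove the two displayed identities. Since $T^{-}_{\beta,\alpha}(x)=1-T^{+}_{\beta,2-\beta-\alpha}(1-x)$ (recorded in the introduction), the reflection $x\mapsto 1-x$ together with the symbol swap $0\leftrightarrow 1$ — which reverses the lexicographic order, carries $J_{\beta,\alpha}$ onto $J_{\beta,2-\beta-\alpha}$, sends $p$ to $p_{\beta,2-\beta-\alpha}$, and interchanges $\tau^{+}_{\beta,\alpha}(p)$ and $\tau^{-}_{\beta,\alpha}(p)$ with the lower and upper kneading invariants of the reflected system — conjugates the asserted identity for $\Omega^{-}_{\beta,\alpha}$ to the one for $\Omega^{+}_{\beta,2-\beta-\alpha}$, with $\preceq$ and $\prec$ exchanged exactly as in the two displays. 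So it suffices to prove $\Omega^{+}_{\beta,\alpha}=\mathcal{A}$, where $\mathcal{A}$ denotes the right-hand side of the identity for $\Omega^{+}_{\beta,\alpha}$, and I would do this by two inclusions. Throughout I use the commutation $\sigma\circ\tau^{+}_{\beta,\alpha}=\tau^{+}_{\beta,\alpha}\circ T^{+}_{\beta,\alpha}$ from \eqref{diag:commutative2} and the fact that each of the two branches of $T^{\pm}_{\beta,\alpha}$ is an increasing affine map of slope $\beta$.

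For $\Omega^{+}_{\beta,\alpha}\subseteq\mathcal{A}$, write $\omega=\tau^{+}_{\beta,\alpha}(x)$ with $x\in J_{\beta,\alpha}$ and set $q_{n}\coloneqq(T^{+}_{\beta,\alpha})^{n}(x)\in J_{\beta,\alpha}$, so $\sigma^{n}(\omega)=\tau^{+}_{\beta,\alpha}(q_{n})$. It is then enough to prove two monotonicity facts: $q<p$ implies $\tau^{+}_{\beta,\alpha}(q)\prec\tau^{-}_{\beta,\alpha}(p)$, and $q\ge p$ implies $\tau^{+}_{\beta,\alpha}(p)\preceq\tau^{+}_{\beta,\alpha}(q)$; every $q_{n}$ satisfies exactly one hypothesis, so $\sigma^{n}(\omega)$ satisfies the required dichotomy for each $n$. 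The second fact follows by comparing the $T^{+}_{\beta,\alpha}$-orbits of $p$ and $q$: one checks inductively that while the two itineraries have agreed the corresponding iterates stay ordered (the same affine branch is applied to both), so at the first discrepancy the itinerary of the smaller point, $p$, carries a $0$. The first fact follows by comparing the $T^{+}_{\beta,\alpha}$-orbit of $q$ with the $T^{-}_{\beta,\alpha}$-orbit of $p$: while the itineraries agree the iterates stay strictly ordered with $q$'s below, and they cannot agree forever because the branches expand distances by the factor $\beta>1$ whereas $J_{\beta,\alpha}$ is bounded, so at the first discrepancy the ordering forces $\tau^{+}_{\beta,\alpha}(q)$ to carry a $0$ where $\tau^{-}_{\beta,\alpha}(p)$ carries a $1$. (Since $\tau^{-}_{\beta,\alpha}(p)$ begins with $0$ and $\tau^{+}_{\beta,\alpha}(p)$ with $1$, membership of $\omega$ in $\mathcal{A}$ is equivalent to: $\sigma^{n}(\omega)\prec\tau^{-}_{\beta,\alpha}(p)$ whenever $\omega_{n+1}=0$, and $\tau^{+}_{\beta,\alpha}(p)\preceq\sigma^{n}(\omega)$ whenever $\omega_{n+1}=1$ — a reformulation I use below.)

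For $\mathcal{A}\subseteq\Omega^{+}_{\beta,\alpha}$, fix $\omega\in\mathcal{A}$; I must exhibit $x\in J_{\beta,\alpha}$ with $\tau^{+}_{\beta,\alpha}(x)=\omega$. Consider the cylinders $C_{N}\coloneqq\{y\in J_{\beta,\alpha}\colon\tau^{+}_{\beta,\alpha}(y)\vert_{N}=\omega\vert_{N}\}$, each of which is an interval on which $(T^{+}_{\beta,\alpha})^{N}$ is a single increasing affine branch (the digits $\omega_{1},\dots,\omega_{N}$ being pinned), mapping $C_{N}$ onto a subinterval $I_{N}$ of $J_{\beta,\alpha}$. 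I would prove $C_{N}\ne\emptyset$ for every $N$ by induction on $N$. Here $C_{N+1}$ is the preimage in $C_{N}$ of the part of $I_{N}$ lying on the side of $p$ labelled by $\omega_{N+1}$, so the inductive step amounts to showing $I_{N}$ does reach that side of $p$; this is where admissibility of $\sigma^{N}\omega$ — available for all $N$ since $\mathcal{A}$ is shift-invariant — enters, via the fact (again a consequence of the monotonicity facts above) that the extremal itinerary over $C_{N}$ on the $0$-side of $p$, respectively the $1$-side, is $\tau^{-}_{\beta,\alpha}(p)$, respectively $\tau^{+}_{\beta,\alpha}(p)$, so that $\sigma^{N}\omega\prec\tau^{-}_{\beta,\alpha}(p)$ (when $\omega_{N+1}=0$) or $\tau^{+}_{\beta,\alpha}(p)\preceq\sigma^{N}\omega$ (when $\omega_{N+1}=1$) is exactly the condition keeping $C_{N+1}$ non-empty. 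Taking $x$ in the intersection $\bigcap_{N}\overline{C_{N}}$, which is non-empty by nestedness and compactness of $J_{\beta,\alpha}$, and checking the itinerary at $x$ with care at the countably many places where the forward orbit of $x$ can land on the discontinuity $p$, one obtains $\tau^{+}_{\beta,\alpha}(x)=\omega$, i.e.\ $\omega\in\Omega^{+}_{\beta,\alpha}$.

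The step I expect to be the real obstacle is the inductive step just described — equivalently, controlling the projection $\pi_{\beta,\alpha}$ near the discontinuity $p$. Because $\beta<2$, $\pi_{\beta,\alpha}$ is not monotone on all of $\{0,1\}^{\bN}$: two lexicographically close words can project to opposite sides of $p$, so proximity in $\mathcal{A}$ does not by itself control position relative to $p$, and one must use admissibility of all the shifts $\sigma^{N}\omega$ simultaneously while tracking the cylinder endpoints precisely in terms of $\tau^{\pm}_{\beta,\alpha}(p)$. This is also the point at which the difference between the strict and non-strict inequalities in the two displays is decided — the orbit of $p$ contributes the digit $1$ under $T^{+}_{\beta,\alpha}$ but $0$ under $T^{-}_{\beta,\alpha}$ — and, as in the first paragraph, once the $\Omega^{+}_{\beta,\alpha}$ identity is in hand the reflection $x\mapsto 1-x$ transfers it to the $\Omega^{-}_{\beta,\alpha}$ identity, with $\preceq$ and $\prec$ correctly interchanged, at no further cost.
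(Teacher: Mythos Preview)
The paper does not prove \Cref{omegastructure}; it is quoted without proof as a known result from \cite{BHV,H,HS}. Consequently there is no ``paper's own proof'' to compare against.

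Your outline follows the standard route taken in the cited references: establish the easy inclusion $\Omega^{+}_{\beta,\alpha}\subseteq\mathcal{A}$ by monotonicity of the itinerary maps along orbits, and obtain the reverse inclusion by showing that every admissible word is realised, via a nested-cylinder argument. The reduction of the $\Omega^{-}$ identity to the $\Omega^{+}$ identity through the reflection $x\mapsto 1-x$ is correct and is exactly the symmetry recorded in the introduction.

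That said, what you have written is a plan rather than a proof, and you yourself flag the genuine gap: the inductive step showing $C_{N+1}\neq\emptyset$ is asserted but not carried out. The claim that ``the extremal itinerary over $C_{N}$ on the $0$-side of $p$ is $\tau^{-}_{\beta,\alpha}(p)$'' is precisely the delicate point --- one needs to know that the image interval $I_{N}$ has the correct endpoint (namely $p$ itself, approached from the appropriate side) whenever the admissibility condition is tight, and this requires tracking whether cylinder intervals are open or closed at each end, which in turn depends on whether the orbit of $p$ has been visited. The final step, passing from $x\in\bigcap_{N}\overline{C_{N}}$ to $\tau^{+}_{\beta,\alpha}(x)=\omega$, also needs more than ``checking with care'': if $x$ lies on the boundary of some $C_{N}$ its $T^{+}$-orbit may hit $p$, and one must verify that the resulting itinerary is still $\omega$ rather than a one-sided neighbour of it. These are exactly the issues handled in \cite{BHV,H,HS}; filling them in would complete the argument.
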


A necessary and sufficient condition on the kneading invariants of an intermediate $\beta$-shift for determining when it is a subshift of finite type is as follows.

\begin{theorem}[{\cite{MR0346134,LSS,MR0142719}}]\label{Parry:GL}
For $(\beta,  \alpha) \in \Delta$, the intermediate shift $\Omega_{\beta,\alpha}$ is a subshift of finite type if and only if $\sigma(\tau_{\beta,\alpha}^\pm(p))$ are both periodic.
\end{theorem}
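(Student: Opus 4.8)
The plan is to deduce both directions from the structure theorem for $\Omega_{\beta,\alpha}$. Write $\omega := \tau^{+}_{\beta,\alpha}(p)$ and $\nu := \tau^{-}_{\beta,\alpha}(p)$ for the upper and lower kneading invariants. By \Cref{omegastructure}, whether a given sequence belongs to $\Omega_{\beta,\alpha} = \Omega^{+}_{\beta,\alpha}\cup\Omega^{-}_{\beta,\alpha}$ is decided entirely by lexicographic comparisons of its forward shifts with $\omega$ and $\nu$, so the assertion ``$\Omega_{\beta,\alpha}$ is a subshift of finite type'' becomes a purely combinatorial statement: when does excluding a fixed lexicographic region from all shifts of a sequence produce a subshift of finite type? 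I would treat the two implications separately.

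$(\Leftarrow)$ Suppose $\sigma(\omega)$ and $\sigma(\nu)$ are periodic, with periods $q_{+}$ and $q_{-}$; thus $\omega$ and $\nu$ are purely periodic from their second coordinate on, while their first coordinates are the fixed values recorded in \Cref{criticalitinerarybegin}. Put $N := 1 + \max\{q_{+}, q_{-}\}$ and let $F$ be the finite set of words of length at most $N$ that do not occur in $\Omega_{\beta,\alpha}$. The aim is to show $\Omega_{\beta,\alpha}$ is exactly the subshift with forbidden set $F$, hence an $N$-step subshift of finite type; only the inclusion ``if every length-$N$ window of a sequence $y$ is legal then $y\in\Omega_{\beta,\alpha}$'' requires an argument. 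If this failed, \Cref{omegastructure} would produce a shift $\sigma^{n}(y)$ lying in the excluded lexicographic region, and such a violation is witnessed by a finite prefix of $\sigma^{n}(y)$ that first agrees with a prefix of $\omega$ (or of $\nu$) and then deviates to the wrong side. Because $\sigma(\omega)$ and $\sigma(\nu)$ are periodic, any long stretch of agreement can be trimmed by whole periods, exhibiting a further shift $\sigma^{m}(y)$ with $m \geq n$ whose first $N$ symbols already form a word of $F$ --- a contradiction. This is the two-endpoint analogue of Parry's classical computation \cite{MR0142719}; the bookkeeping (which kneading prefix a window matches, on which side it may deviate, and the treatment of the boundary sequences $\sigma^{n}(y)\in\{\omega,\nu\}$ arising from the union $\Omega^{+}_{\beta,\alpha}\cup\Omega^{-}_{\beta,\alpha}$) is routine but needs care.

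$(\Rightarrow)$ Suppose $\Omega_{\beta,\alpha}$ is a subshift of finite type and fix $N$ so that it is $N$-step; I argue that $\sigma(\omega)$ is periodic, the case of $\sigma(\nu)$ being symmetric. Since $\omega$ is the itinerary of the turning point $p$, its shifts $\sigma^{n}(\omega)$ are the itineraries along the orbit $(T^{+}_{\beta,\alpha})^{n}(p)$; combining the $N$-step property with the extremality of the kneading itinerary built into \Cref{omegastructure} (at each step $\omega$ makes the tightest admissible choice), one shows that the tail $\sigma^{n}(\omega)$ is determined by the window $(\omega_{n+1},\dots,\omega_{n+N})$. As there are only finitely many windows, $n\mapsto\sigma^{n}(\omega)$ is eventually periodic, i.e.\ $\omega$ is eventually periodic. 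The remaining step --- promoting this to ``$\sigma(\omega)$ is purely periodic'', equivalently that $\omega$ has pre-period at most one --- is the genuinely finite-type input: it is the gap between soficity and the subshift of finite type property, and it is forced by $p$ being the unique discontinuity of $T^{\pm}_{\beta,\alpha}$ together with the fixed first symbols of the kneading invariants (\Cref{criticalitinerarybegin}), since a pre-period of length at least two would survive in $\sigma(\omega)$ and contradict the window-determinacy just established.

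I expect the main obstacle to lie precisely in these periodicity refinements: in $(\Leftarrow)$, making the ``trim the matched prefix by whole periods'' argument airtight for a two-sided lexicographic exclusion and for the union of the two one-sided shifts $\Omega^{\pm}_{\beta,\alpha}$; and in $(\Rightarrow)$, upgrading ``the kneading invariants are eventually periodic'' (which only yields soficity) to ``$\sigma$ of each kneading invariant is periodic'', which is where the geometry of $T^{\pm}_{\beta,\alpha}$ near $p$ and near the endpoints of $J_{\beta,\alpha}$ genuinely enters, beyond the purely lexicographic structure.
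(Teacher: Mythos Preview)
The paper does not give its own proof of \Cref{Parry:GL}; the theorem is stated with attribution to \cite{MR0346134,LSS,MR0142719} (Ito--Takahashi and Parry for $\alpha\in\{0,2-\beta\}$, Li \textsl{et al.}\ for $\alpha\in(0,2-\beta)$) and is used throughout as a black box. There is therefore no in-paper argument to compare your proposal against.

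On the proposal itself: your $(\Leftarrow)$ outline is the standard one and is essentially correct once the bookkeeping is done, though you rightly flag that handling the union $\Omega^{+}_{\beta,\alpha}\cup\Omega^{-}_{\beta,\alpha}$ and the two-sided lexicographic exclusion needs care. The $(\Rightarrow)$ direction, however, contains a genuine gap rather than just missing detail. The assertion that in an $N$-step subshift of finite type ``the tail $\sigma^{n}(\omega)$ is determined by the window $(\omega_{n+1},\dots,\omega_{n+N})$'' is not a consequence of the $N$-step property alone (that property only says admissibility is window-local, not that continuations are unique), and your appeal to ``extremality'' is a name for the needed lemma rather than a proof of it. Likewise, the step upgrading eventual periodicity of $\omega$ to periodicity of $\sigma(\omega)$ is precisely the distinction between sofic and finite type (cf.\ \Cref{thm:sofic}), and ``the geometry of $T^{\pm}_{\beta,\alpha}$ near $p$'' is a placeholder: in the cited references this is handled by showing that periodicity of the critical orbit forces a finite Markov partition, and conversely that a finite Markov partition forces $p$ itself (not merely some forward image of $p$) to be periodic under the appropriate one-sided map. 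If you want to turn your sketch into a proof, that is the mechanism you need to supply.
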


With the above at hand, it is natural to ask if $\beta \in  (1, 2)$ and $\alpha \in (0, 2-\beta)$, then is true that $\tau_{\beta,\alpha}^+(p)$ is periodic if and only if $\tau_{\beta,\alpha}^-(p)$ is periodic and vice versa?  In \Cref{+iff-} we show that this is indeed the case when $\beta$ is a multinacci number.  However, there exist values of $\beta \in (1, 2)$ for which this does not hold, as the following counterexample demonstrates.   Thus, it would be interesting to investigate if there exists other values of $\beta \in (1, 2)$, for which $\tau_{\beta,\alpha}^+(p)$ is periodic if and only if $\tau_{\beta,\alpha}^-(p)$.  In fact this idea is very closely linked to the concept of matching which has recently attracted much attention.

\begin{exm}\label{ex:+periodic-non-periodic}
Letting $\beta = \sqrt{\beta_{2}}$ and $\alpha = 2 - \beta_{2}$, we have that  $\tau^{+}_{\beta, \alpha}(p) = (\overline{1, 0, 0, 1})$ and $\tau^{-}_{\beta, \alpha}(p) = ( 0, 1, \overline{ 1, 0})$.  Recall, $\beta_{2}$ denotes the second multinacci number, namely the golden mean.
\end{exm}

Kalle and Steiner \cite{KalleSteiner} developed an analogous result to \Cref{Parry:GL} for determining when a $\beta$-shift is sofic; this allows us to conclude \Cref{cor:sofic} from \Cref{thm:thmB}.  Their result states the following.

\begin{theorem}[\cite{KalleSteiner}] \label{thm:sofic}
The subshift $\Omega_{\beta, \alpha}$ is sofic if and only if $\tau_{\beta, \alpha}^{\pm}(p)$ are both pre-periodic.
\end{theorem}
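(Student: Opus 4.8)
The plan is to translate soficness of $\Omega_{\beta, \alpha}$ into a finiteness statement about follower sets and then read everything off the lexicographic description supplied by \Cref{omegastructure}. Recall the standard automata-theoretic fact that a subshift $\Omega$ is sofic if and only if it has only finitely many distinct follower sets $\mathcal{F}(w) \coloneqq \{ \xi \in \{0,1\}^{\bN} \colon w \xi \in \Omega \}$ as $w$ ranges over $\Omega^{*}$. Write $a \coloneqq \tau_{\beta, \alpha}^{-}(p)$ and $b \coloneqq \tau_{\beta, \alpha}^{+}(p)$ for the lower and upper kneading invariants. Directly from the definitions of $T_{\beta, \alpha}^{\pm}$ one has $a_{1} = 0$ and $b_{1} = 1$, so $a \prec b$, and hence \Cref{omegastructure} says that a sequence $\omega$ lies in $\Omega_{\beta, \alpha}$ exactly when no shift $\sigma^{n}(\omega)$ falls in the open interval $(a, b)$, subject only to the boundary convention that a shift may equal $a$ or equal $b$ but not both. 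A short lexicographic argument then locates $a$ and $b$ intrinsically inside $\Omega_{\beta, \alpha}$: indeed $b = \min \{ \omega \in \Omega_{\beta, \alpha} \colon \omega_{1} = 1 \}$ and $a = \max \{ \omega \in \Omega_{\beta, \alpha} \colon \omega_{1} = 0 \}$, since any $\omega \in \Omega_{\beta, \alpha}$ beginning with $1$ and strictly below $b$ would satisfy $a \prec \omega \prec b$, which \Cref{omegastructure} forbids.

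For the forward implication I would argue as follows: suppose $\Omega_{\beta, \alpha}$ is sofic, hence has only finitely many follower sets. Since $b$ is the lexicographically least element of $\Omega_{\beta, \alpha}$ beginning with $1$, it is produced by the greedy rule $b_{1} = 1$, $b_{k+1} = \min \{ c \in \{0,1\} \colon b \vert_{k} c \in \Omega_{\beta, \alpha}^{*} \}$; that this greedy word converges to a genuine element of $\Omega_{\beta, \alpha}$ uses only that $\Omega_{\beta, \alpha}$ is closed. The key point is that the symbol $b_{k+1}$ and the follower set $\mathcal{F}(b \vert_{k+1})$ are each determined by $\mathcal{F}(b \vert_{k})$ alone. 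Hence, choosing indices $i < j$ with $\mathcal{F}(b \vert_{i}) = \mathcal{F}(b \vert_{j})$ — possible by finiteness — an induction forces $b_{i+t} = b_{j+t}$ for all $t \geq 1$, that is $\sigma^{i}(b) = \sigma^{j}(b)$, so $b$ is pre-periodic. The symmetric argument, with $\max$ in place of $\min$ and the letter $0$ in place of $1$, shows that $a$ is pre-periodic.

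For the converse, suppose $a$ and $b$ are both pre-periodic, so that $P \coloneqq \{ \sigma^{m}(a) \colon m \geq 0 \} \cup \{ \sigma^{m}(b) \colon m \geq 0 \}$ is finite; I would then show directly that $\Omega_{\beta, \alpha}$ has finitely many follower sets. Fix $w = w_{1} \cdots w_{\ell} \in \Omega_{\beta, \alpha}^{*}$ and a sequence $\xi$ extending it. For $n \geq \ell$ the admissibility requirement $\sigma^{n}(w \xi) \notin (a, b)$ reduces to $\xi \in \Omega_{\beta, \alpha}$; for $n < \ell$ it reads $s^{(n)} \xi \notin (a, b)$, where $s^{(n)} \coloneqq w_{n+1} \cdots w_{\ell}$ is a suffix of $w$. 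Comparing the cylinder determined by $s^{(n)}$ with $a$ and $b$ shows that this imposes no constraint on $\xi$ unless $s^{(n)}$ is a prefix of $a$ or of $b$, and that when it does, the constraint is a comparison of $\xi$ against $\sigma^{|s^{(n)}|}(a)$ and/or $\sigma^{|s^{(n)}|}(b)$ — both members of $P$ — with the three subcases ($s^{(n)}$ a prefix of exactly $a$, of exactly $b$, or of both) producing a one-sided, a one-sided, or a two-sided inequality, respectively. Consequently $\mathcal{F}(w)$ is determined by the subset of $P \times \{ a, b \}$ recording which suffixes of $w$ are prefixes of $a$ and which are prefixes of $b$; there being only finitely many such subsets, there are only finitely many follower sets, so $\Omega_{\beta, \alpha}$ is sofic. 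An alternative packaging, which sidesteps the boundary convention on $a$ and $b$, is to run this argument for $\Omega_{\beta, \alpha}^{+}$ and $\Omega_{\beta, \alpha}^{-}$ separately (whose forbidden regions $[a, b)$ and $(a, b]$ are genuine intervals) and then invoke that a finite union of sofic shifts is sofic.

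The forward implication is the soft half: it is essentially the pigeonhole principle applied to follower sets, once $a$ and $b$ are recognised as extremal points of $\Omega_{\beta, \alpha}$. I expect the converse to be the main obstacle, and within it the book-keeping around the endpoints of $(a, b)$ — for each suffix of $w$, deciding whether it is a prefix of $a$, of $b$, or of both, and translating the resulting occurrence of $a$ or $b$ into a one-sided or two-sided inequality on the future $\xi$. Checking that $\mathcal{F}(w)$ depends on nothing beyond this finite packet of data, which is controlled exactly by the finitely many shifts of $a$ and of $b$, is the technical heart; it is also where pre-periodicity of \emph{both} kneading invariants, and not merely one of them, is used.
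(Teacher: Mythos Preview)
The paper does not prove \Cref{thm:sofic}; it is quoted from Kalle--Steiner \cite{KalleSteiner} without argument. So there is no ``paper's own proof'' to compare against, and your follower-set approach stands on its own. It is essentially correct.

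Two small remarks. First, your caveat that ``a shift may equal $a$ or equal $b$ but not both'' is in fact automatically satisfied: since $a=\tau_{\beta,\alpha}^{-}(p)\in\Omega_{\beta,\alpha}^{-}$ and $b=\tau_{\beta,\alpha}^{+}(p)\in\Omega_{\beta,\alpha}^{+}$, one checks from \Cref{omegastructure} that $\sigma^{k}(a)\neq b$ and $\sigma^{k}(b)\neq a$ for all $k\geq 0$, and hence no sequence can have both $a$ and $b$ among its shifts. Consequently $\Omega_{\beta,\alpha}$ is exactly the set of $\omega$ with $\sigma^{n}(\omega)\notin(a,b)$ for all $n$, with no boundary convention needed. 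This makes your converse direction cleaner than you anticipated: the suffix analysis yields $\mathcal{F}(w)=\Omega_{\beta,\alpha}\cap[v_{w},u_{w}]$ with $u_{w}\in\{\sigma^{m}(a):m\geq 1\}\cup\{\infty\}$ and $v_{w}\in\{\sigma^{m}(b):m\geq 1\}\cup\{-\infty\}$, both finite sets by pre-periodicity. Second, your ``alternative packaging'' via $\Omega_{\beta,\alpha}^{\pm}$ separately would require these to be closed, which they need not be (a sequence of points in $\Omega_{\beta,\alpha}^{+}$ with $n$-th shift approaching $a$ from below can converge to a point with $n$-th shift equal to $a$, which lies only in $\Omega_{\beta,\alpha}^{-}$); but since the direct argument for $\Omega_{\beta,\alpha}$ goes through cleanly, this is moot.
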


Our next proposition (\Cref{+iff-}) plays a key r\^ole in the proof of \Cref{SFTdense}.  We note that after the writing of this paper we became aware of \cite{KalleBruinCarminati} in which a proof of this result also appears.  However, for completeness we include a short justification for which we require an auxiliary lemma (\Cref{beginword}).

\begin{prop} \label{+iff-}
Fix an integer $m \geq 2$ and let $\alpha \in \Delta(\beta_m)\backslash \{0, 2-\beta_n\}$.  The kneading invariant $\tau_{\beta_m, \alpha}^-(p)$ is periodic if and only if the kneading invariant $\tau_{\beta_m, \alpha}^+(p)$ is periodic.
\end{prop}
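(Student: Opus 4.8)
Set $\beta \coloneqq \beta_{m}$, fix $\alpha \in (0, 2 - \beta)$, write $p \coloneqq p_{\beta, \alpha} = (1-\alpha)/\beta$, and abbreviate $T^{\pm} \coloneqq T^{\pm}_{\beta, \alpha}$, $\tau^{\pm} \coloneqq \tau^{\pm}_{\beta, \alpha}$. The plan is to recast the statement about the kneading words $\tau^{\pm}(p)$ as a statement about the orbit of $p$ under the interval maps $T^{\pm}$, and then to use the defining relation of the multinacci number to synchronise the two relevant orbits. First I would observe that $\tau^{\pm}$ is injective: if two distinct points of $J_{\beta, \alpha}$ had the same itinerary, then the interval between them would be mapped affinely with slope $\beta^{n}$ by $(T^{\pm})^{n}$ for every $n$, contradicting the boundedness of $J_{\beta, \alpha}$. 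Since $\tau^{\pm}$ also intertwines $T^{\pm}$ with $\sigma$ (see \eqref{diag:commutative2}), the word $\tau^{\pm}(p)$ is periodic if and only if $(T^{\pm})^{n}(p) = p$ for some $n \in \bN$, i.e.\ if and only if $p$ is a periodic point of $T^{\pm}$. Thus it is enough to show that $p$ is $T^{+}$-periodic if and only if it is $T^{-}$-periodic.

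The crux, and the step I expect to be the main obstacle, is the following synchronisation, which is where the multinacci structure is indispensable. Using the two rearrangements of $\beta^{m} = \beta^{m-1} + \dots + 1$, namely $1 + \beta + \dots + \beta^{m-1} = \beta^{m}$ and $2 - \beta = \beta^{-m}$, together with $T^{+}(p) = 0$ and $T^{-}(p) = 1$, I would show that
\begin{align*}
(T^{+})^{m}(0) \;=\; \alpha\beta^{m} \;=\; \beta^{m} - (1-\alpha)\beta^{m} \;=\; (T^{-})^{m}(1) \;=:\; q \in (0,1).
\end{align*}
Indeed, since $0 < \alpha < 2 - \beta = (1 + \beta + \dots + \beta^{m-1})^{-1}$, two straightforward inductions show that no branch switch occurs in the first $m$ iterates, so that $(T^{+})^{i}(0) = \alpha(1 + \beta + \dots + \beta^{i-1}) < p$ and $(T^{-})^{i}(1) = \beta^{i} - (1-\alpha)(1 + \beta + \dots + \beta^{i-1}) > p$ for $0 \le i \le m - 1$; one more iterate together with $1 + \beta + \dots + \beta^{m-1} = \beta^{m}$ then yields the displayed equalities, and $q = \alpha/(2 - \beta) \in (0,1)$. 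I expect this to be essentially the content of the auxiliary lemma referenced in the paper. As a by-product, none of $0, T^{+}(0), \dots, (T^{+})^{m-1}(0)$ nor $1, T^{-}(1), \dots, (T^{-})^{m-1}(1)$ equals $p$.

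It then remains to assemble these facts. If $q = p$ --- which, using $\beta^{m+1} + 1 = 2\beta^{m}$, happens exactly when $\alpha = 1 - \beta/2$ --- then $(T^{+})^{m+1}(p) = q = p = (T^{-})^{m+1}(p)$, so $p$ is periodic under both maps and we are done. Otherwise $q \ne p$; by the previous step the $T^{+}$-orbit of $p$ begins $p, 0, \dots, (T^{+})^{m-1}(0), q$ and the $T^{-}$-orbit of $p$ begins $p, 1, \dots, (T^{-})^{m-1}(1), q$, and in both lists every point after the initial $p$ differs from $p$. Because $T^{+}$ and $T^{-}$ agree off $p$, the forward orbit of $q$ is one and the same sequence for either map, up to and including its first return to $p$ if there is one. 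Hence $p$ is $T^{+}$-periodic precisely when the orbit of $q$ eventually reaches $p$ (forwards: if $(T^{+})^{k}(q) = p$ with $k$ minimal then $(T^{+})^{m+1+k}(p) = p$; backwards: if $p$ is $T^{+}$-periodic then its orbit returns to $p$, and by the by-product this return can only occur along the orbit of $q$), and by the identical argument $p$ is $T^{-}$-periodic precisely when the orbit of $q$ eventually reaches $p$; since this last condition is insensitive to the choice of $T^{\pm}$, the two periodicity conditions coincide. Combined with the first paragraph this proves the proposition. The only delicate points are the injectivity input in the first step and the careful handling of the ambiguous value $p$ of $T^{\pm}$ in the last step; the one genuinely new idea is the synchronisation $q = \alpha\beta^{m}$.
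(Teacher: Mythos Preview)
Your proposal is correct and follows essentially the same approach as the paper: both arguments establish the synchronisation $(T^{+})^{m+1}(p) = (T^{-})^{m+1}(p) = \alpha\beta_{m}^{m}$ via the multinacci identity $1 + \beta_{m} + \dots + \beta_{m}^{m-1} = \beta_{m}^{m}$ (this is exactly the content of the auxiliary lemma and the short computation in the paper), and then deduce the equivalence of periodicity. Your write-up is in fact more explicit than the paper's at the final step, spelling out the injectivity of $\tau^{\pm}$ and the case split $q = p$ versus $q \neq p$ that the paper's concluding ``Thus'' leaves implicit.
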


\begin{lemma} \label{beginword}
Under the assumptions of \Cref{+iff-}, we have 
\begin{align*}
\tau_{\beta_m, \alpha}^+(p)\vert_{m+1} = (1 \underbrace{0, 0, \dots, 0, 0}_{ m-\text{times}})
\quad \text{and} \quad
\tau_{\beta_m, \alpha}^-(p)\vert_{m+1} = (0 \underbrace{1, 1, \dots, 1, 1}_{ m-\text{times}}).
\end{align*}
\end{lemma}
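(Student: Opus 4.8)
The plan is to track the orbit of the critical point $p = p_{\beta_m,\alpha} = (1-\alpha)/\beta_m$ under $T_{\beta_m,\alpha}^{\pm}$ directly, reducing the whole statement to a single elementary inequality which, after one invokes the defining relation of the multinacci number, turns out to be precisely the standing hypothesis $\alpha < 2-\beta_m$. Throughout write $\beta = \beta_m$, and note that $\alpha \in (0, 2-\beta)$ forces $1-\alpha \in (0,1)$ and hence $0 < p < 1/\beta < 1$.

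For the upper kneading invariant, set $x_j \coloneqq (T_{\beta,\alpha}^{+})^{j}(p)$. Since $p \not< p$, we have $\omega_1^{+}(p) = 1$ and $x_1 = \beta p + \alpha - 1 = 0$. I would then show by induction that $x_k < p$ for $k = 1, \dots, m$: the base case $x_1 = 0 < p$ is clear, and if $x_1, \dots, x_k < p$ with $k < m$, then every step so far used the left branch, so telescoping (using $x_1 = 0$) gives $x_{k+1} = \beta x_k + \alpha = \alpha(\beta^{k}-1)/(\beta-1) \le \alpha(\beta^{m-1}-1)/(\beta-1)$. It therefore remains to verify $\alpha(\beta^{m-1}-1)/(\beta-1) < (1-\alpha)/\beta$; clearing denominators, this is equivalent to $\alpha(\beta^{m}-1) < \beta-1$, i.e. $\alpha < (\beta-1)/(\beta^{m}-1)$. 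Now the multinacci relation $\beta^{m} = \beta^{m-1}+\dots+\beta+1 = (\beta^{m}-1)/(\beta-1)$ gives $(\beta-1)/(\beta^{m}-1) = \beta^{-m}$ and, multiplying $\beta^{m}(\beta-1) = \beta^{m}-1$ by $\beta^{-m}$, the identity $\beta^{-m} = 2-\beta$. Hence the inequality required is exactly $\alpha < 2-\beta$, which holds strictly by hypothesis. Consequently $\omega_n^{+}(p) = 0$ for $n = 2,\dots,m+1$, which is the first claim.

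For the lower kneading invariant I would invoke the symmetry $T_{\beta,\alpha}^{-}(x) = 1 - T_{\beta,2-\beta-\alpha}^{+}(1-x)$ recorded in the introduction. Put $\alpha' \coloneqq 2-\beta-\alpha \in (0,2-\beta)$; an easy induction gives $(T_{\beta,\alpha}^{-})^{j}(x) = 1 - (T_{\beta,\alpha'}^{+})^{j}(1-x)$, and since $p_{\beta,\alpha'} = 1-p$ this yields $(T_{\beta,\alpha}^{-})^{j}(p) = 1 - (T_{\beta,\alpha'}^{+})^{j}(p_{\beta,\alpha'})$. Because $p \leq p$, we have $\omega_1^{-}(p) = 0$; and for $1 \le j \le m$ the computation above, applied with $\alpha'$ in place of $\alpha$, gives $(T_{\beta,\alpha'}^{+})^{j}(p_{\beta,\alpha'}) < p_{\beta,\alpha'} = 1-p$, so $(T_{\beta,\alpha}^{-})^{j}(p) > p$ and $\omega_n^{-}(p) = 1$ for $n = 2,\dots,m+1$, which is the second claim.

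The argument is elementary from start to finish; the one step I would flag as the crux is recognising the identity $\beta_m^{-m} = 2-\beta_m$ (equivalently $\beta_m^{m+1} = 2\beta_m^{m}-1$), which is what makes the single inequality controlling the orbit coincide exactly with the hypothesis — so the lemma is, in this sense, sharp, failing at $\alpha = 2-\beta_m$. One must also keep careful track of the asymmetric treatment of $p$ in the definitions of $\omega_n^{+}$ (strict $<$) and $\omega_n^{-}$ (nonstrict $\le$), which is precisely what produces the leading symbols $1$ and $0$ respectively, and of the strictness of all the inequalities, which is guaranteed by $\alpha \notin \{0,2-\beta_m\}$.
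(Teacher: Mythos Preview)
Your proof is correct and follows the same overall strategy as the paper --- track the orbit of $p$ step by step and reduce everything to the defining relation of $\beta_m$. The differences are minor but worth recording: the paper proves the $\tau^{-}$ case directly (declaring $\tau^{+}$ analogous) and bounds each intermediate step via the monotonicity $\beta_m \geq \beta_{j+1}$ of the multinacci sequence, whereas you handle $\tau^{+}$ directly and then derive $\tau^{-}$ from the symmetry $T_{\beta,\alpha}^{-}(x) = 1 - T_{\beta,2-\beta-\alpha}^{+}(1-x)$ already recorded in the introduction. Your reduction to the single identity $\beta_m^{-m} = 2-\beta_m$ is a little cleaner than the paper's comparison argument and has the advantage of making the sharpness at $\alpha = 2-\beta_m$ explicit; the paper's route, on the other hand, avoids the need to verify that the symmetry respects the strict/non-strict conventions in the definitions of $\omega_n^{\pm}$.
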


\begin{proof}
We present the proof for $\tau_{\beta_{m}, \alpha}^{-} (p)$; the proof for $\tau_{\beta_{m}, \alpha}^{+} (p)$ follows analogously.  From \Cref{criticalitinerarybegin} we know $\tau_{\beta, \alpha}^-(p)\vert_{2} = (0, 1)$, 
and, since $\beta_{m} \geq \beta_{2}$, by definition $(T_{\beta_{m}, \alpha}^{-})^{2}(p) = \beta_{m} + \alpha -1 > p$.  Suppose, for some $j \in \{ 1, 2, \dots m -1 \}$, that
\begin{align*}
\tau_{\beta_m, \alpha}^-(p)\vert_{j+1} = (0 \underbrace{1, 1, \dots, 1, 1}_{ j-\text{times}}).
\end{align*}
Let $S_{0}(x) \coloneqq \beta_{m} x + \alpha$ and $S_{1}(x) \coloneqq \beta_{m} x + \alpha - 1$. It suffices to show $\beta (T^{-}_{\beta, \alpha})^{j+1}(p) + \alpha =\beta({S_{1}}^{j} \circ S_{0}(p)) + \alpha$ is strictly greater than $1$. To this end, observe that
\begin{align*}
\beta(S_{1}^{j} \circ S_0(p)) + \alpha
&= \beta_m^{j+1} + \alpha(\beta_m^j + \beta_m^{j-1} + \dots +\beta_m + 1) -\beta_m^j - \beta_m^{j-1} - \dots -\beta_m \\
&> \beta_m^{j+1} -\beta_m^j - \beta_m^{j-1} - \dots -\beta_m \\
&\geq \beta_{j+1}^{j+1} -\beta_{j+1}^j - \beta_{j+1}^{j-1} - \dots -\beta_{j+1} = 1.
\end{align*}
The first line follows from an elementary induction argument and the definition of $S_{0}^{-}$ and $S_{1}^{-}$; the second line holds since $\alpha > 0$; the last and penultimate lines are a consequence of the facts $(\beta_{k})_{k \in \mathbb{N}}$ is an increasing sequence and $\beta_{j+1}$ is the unique real zero of the polynomial $x^{j+1} - x^{j} - \dots - x - 1$ in $(1, 2)$.
\end{proof}

\begin{proof}[Proof of \Cref{+iff-}]
By \Cref{beginword}, we have
\begin{align*}
\tau_{\beta_m, \alpha}^+(p)\vert_{m+1} = (1, \underbrace{0, 0, \dots, 0, 0}_{m-\text{times}})
\quad \text{and} \quad
\tau_{\beta_m, \alpha}^-(p)\vert_{m+1} = (0, \underbrace{1, 1, \dots, 1, 1}_{m-\text{times}}).
\end{align*}
Letting $S_{0}$ and $S_{1}$ be as in the proof of \Cref{beginword}, an elementary calculation yields the following.
	\begin{align*}
	(T_{\beta_{m}, \alpha}^{+})^{m+1}(p) &= S_{0}^{m} \circ S_{1}(p) 
	\,= \alpha (\beta_m^{m-1} + \beta_m^{m-2} + \dots + \beta_m + 1) 
	= \alpha \beta_m^m\\
	(T_{\beta_{m}, \alpha}^{-})^{m+1}(p) &= 
	\begin{aligned}[t]
	S_{1}^{m} \circ S_{0}(p) 
	&= \alpha (\beta_m^{m-1} + \beta_m^{m-2} + \dots + \beta_m + 1)\\
	&\hspace{1.125em} +  (\beta_m^m - \beta_m^{m-1} - \beta_m^{m-2} - \dots - \beta_m - 1) \\
	&= \alpha (\beta_m^{m-1} + \beta_m^{m-2} + \dots + \beta_m + 1)
= \alpha \beta_m^m
	\end{aligned}
	\end{align*}
Namely, $(T_{\beta_m, \alpha}^+)^{m+1}(p) = (T_{\beta_m, \alpha}^-)^{m+1}(p)$.  Thus, $\tau_{\beta_m, \alpha}^-(p)$ is periodic if and only if $\tau_{\beta_m, \alpha}^+(p)$ is periodic.
\end{proof}

\subsection{Uniform Lorenz maps}\label{sec_uniform}

A class of maps closely related to intermediate \mbox{$\beta$-transformations}, and which have been well studied, are Lorenz maps.  They are expanding interval maps with a single discontinuity. Here, we consider the sub-class of \textsl{uniform Lorenz maps} $U_{\beta, p}^{\pm} \colon [0, 1] \circlearrowleft$ defined, for $\beta \in (1,2)$ and $q \in [1 - 1/\beta, 1/\beta]$, by 
\begin{align*}
U_{\beta, q}^+(x) \coloneqq \begin{cases}
             \beta x   & \text{if} \; x < q,\\
             \beta x + 1 - \beta  & \text{if} \; x \geq q.
       \end{cases}
\quad \text{and} \quad      
U_{\beta, q}^-(x) \coloneqq \begin{cases}
              \beta x  & \text{if} \; x \leq q,\\
             \beta x + 1 - \beta  & \text{if} \; x > q,
       \end{cases}
\end{align*}
Let us now describe the relation between uniform Lorenz maps and $\beta$-transformations.  For this we require the following concept, which determines when two dynamical systems are `the  same'.  Let $X$ and $Y$ denote two topological spaces and let $f \colon X \circlearrowleft$ and $g\colon Y \circlearrowleft$. We say that $f$ and $g$ are \textsl{topologically conjugate} if there exists a homeomorphism $h \colon X \to Y$ such that $h \circ f = g \circ h$. The maps $f$ and $g$ are called \textsl{topologically semi-conjugate} if $h$ is a continuous surjection.

An elementary calculation shows that $T_{\beta, \alpha}^{\pm}$ and $U^{\pm}_{\beta, 1 + (\alpha-1)/\beta}$ are topologically conjugate, where the conjugating homeomorphism is given by $x \mapsto (\beta-1)(x + \alpha/(\beta - 1))$.

Similar to $\beta$-transformations, Lorenz maps have associated expansion maps $\mu_ {\beta, q}^{\pm} \colon [0, 1] \to \{ 0, 1\}^{\bN}$ defined by $\mu_{\beta, q}^\pm(x) \coloneqq (\nu_{1}^{\pm}(x), \nu_{2}^{\pm}(x), \dots)$, where, for $n \in \bN$,
\begin{align*}
\nu_{n}^{+}(x) \coloneqq 
\begin{cases}
0 & \text{if} \; (U_{\beta,q}^{+})^{n-1}(x) < q,\\
1 & \text{otherwise,} 
\end{cases}
\quad \text{and} \quad
\nu_{n}^{-}(x) = 
\begin{cases}
0 & \text{if} \; (U_{\beta,q}^{-})^{n-1}(x) \leq q, \\
1 & \text{otherwise,}
\end{cases}
\end{align*}
as well as an associated projection map $\rho_{\beta} \colon \{ 0, 1 \}^{\bN} \to [0, 1]$ given by $\rho_{\beta} (\omega_{1}, \omega_{2}, \dots) \coloneqq (\beta - 1) \sum_{i = 1}^{\infty} \omega_{i} \beta^{-i}$.  As in the setting of \Cref{sec:betashifts} we have that the following diagram commutes.
	\begin{align*}
	\begin{aligned}
	\xymatrix@C+2pc{
	 \mu_{\beta, q}^{\pm}([0, 1])
	\ar@/_/[d]_{\rho_{\beta}}
	\ar[r]^{\sigma} & 
	 \mu_{\beta, q}^{\pm}([0, 1])
	\ar@/^/[d]^{\rho_{\beta}} \\
	{[ 0, 1 ]}
	 \ar@/_/[u]_{\mu_{\beta,q}^{\pm}}
	\ar[r]_{U_{\beta, q}^{\pm}}  & 
	 \ar@/^/[u]^{\mu_{\beta,q}^{\pm}}
	{[ 0, 1 ]}}
	\end{aligned}
	\end{align*}
Additionally, we have the following monotonicity result.

\begin{prop} \label{tauinc} \cite{BHV,COOPERBAND202096}
Let $\beta \in (1, 2)$ be fixed.  The map $x \mapsto \mu_{\beta, x}^{+}(x)$ is right-continuous and strictly increasing. Similarly, $x \mapsto \mu_{\beta, x}^{-}(x)$ is left-continuous and strictly increasing.  Moreover, points of discontinuity, for both maps, only occur at periodic points.
\end{prop}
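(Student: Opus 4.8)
\textbf{Proof plan for \Cref{tauinc}.}

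The plan is to work directly with the orbit maps $(U^{\pm}_{\beta,q})^{n}$ and exploit the fact that, for fixed $\beta$, the slope $\beta$ is the same on both branches, so that for a fixed point $x$ the quantity $(U^{+}_{\beta,q})^{n}(x)$ depends on $q$ only through the finitely many comparisons ``$(U^{+}_{\beta,q})^{i}(x) < q$ or $\geq q$'' for $i < n$. First I would set up notation: fix $x$ and define, for each $q$, the itinerary $\mu^{+}_{\beta,q}(x)$; I want to compare $\mu^{+}_{\beta,q}(x)$ and $\mu^{+}_{\beta,q'}(x)$ for $q < q'$ (here the first coordinate of the map $x \mapsto \mu^{+}_{\beta,x}(x)$ is $q$ itself and the second is the evaluation point, which also moves — so really I am comparing $\mu^{+}_{\beta,q}(q)$ with $\mu^{+}_{\beta,q'}(q')$; I will handle the coincidence of the two arguments below). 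The key monotonicity input is the elementary branch monotonicity: for $q < q'$ and any $y$, $U^{+}_{\beta,q}(y) \leq U^{+}_{\beta,q'}(y)$, with strict inequality exactly when $y \in [q,q')$, because raising the cut point can only move a point from the upper (subtract $\beta-1$) branch to the lower branch, which increases its image. Hence by induction on $n$, along the orbit of a \emph{fixed} starting point $y$ one gets $(U^{+}_{\beta,q})^{n}(y) \leq (U^{+}_{\beta,q'})^{n}(y)$; combined with the lexicographic description of itineraries (the analogue of \Cref{omegastructure}, or just the standard kneading-sequence argument that the itinerary of a point is lexicographically monotone in the point and in the cut), this yields $\mu^{+}_{\beta,q}(y) \preceq \mu^{+}_{\beta,q'}(y)$.

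For the genuinely diagonal statement — strict monotonicity of $q \mapsto \mu^{+}_{\beta,q}(q)$ — I would argue as follows. Given $q < q'$, pick any $r$ with $q < r < q'$. Then $\mu^{+}_{\beta,q}(q) \preceq \mu^{+}_{\beta,q}(r)$ because $q < r$ and for \emph{fixed} cut $q$ the itinerary is non-decreasing in the point (the point $r$ starts on the upper branch where $q$'s point $q$ starts on the lower branch, forcing the first symbols $0$ vs.\ $1$ unless they agree, and monotonicity propagates); next $\mu^{+}_{\beta,q}(r) \preceq \mu^{+}_{\beta,q'}(r)$ by the previous paragraph; and finally $\mu^{+}_{\beta,q'}(r) \preceq \mu^{+}_{\beta,q'}(q')$ again by monotonicity in the point for the fixed cut $q'$. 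To get \emph{strict} inequality I use that at least one of these three steps is strict: indeed $\omega^{+}_{1}(\mu^{+}_{\beta,q}(q)) = 0$ while $\omega^{+}_{1}(\mu^{+}_{\beta,q'}(q')) = 1$, since $q < q'$ means the point $q$ lies strictly below its own cut $q$ — wait, $q$ is not $< q$; here I must instead observe $q < q'$ gives that $q$ is strictly below the cut $q'$, so the first symbol of $\mu^{+}_{\beta,q'}(q)$ is $0$, whereas the first symbol of $\mu^{+}_{\beta,q'}(q')$ is $1$, making the step $\mu^{+}_{\beta,q'}(r)\to\mu^{+}_{\beta,q'}(q')$ strict once $r$ is chosen with $r<q'$ so that $\mu^{+}_{\beta,q'}(r)$ begins with $0$. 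This gives strict increase. Right-continuity of $q \mapsto \mu^{+}_{\beta,q}(q)$ follows from the fact that the ``$+$'' convention places the cut-point value $q$ on the \emph{lower} (left, symbol-$0$) branch, so each comparison $(U^{+}_{\beta,q})^{i}(q) \lessgtr q$ is stable under small increases of $q$ except when an orbit point equals the cut; and if we let $q \uparrow$ through a point where some $(U^{+}_{\beta,q})^{i}(q)=q$, i.e.\ $q$ is periodic, the itinerary can jump — which is exactly the ``discontinuities occur only at periodic points'' clause. The ``$-$'' statements are the mirror image, obtained from the relation $U^{-}_{\beta,q}(x) = 1 - U^{+}_{\beta,1-q}(1-x)$ (the analogue of the symmetry $T^{-}_{\beta,\alpha}(x)=1-T^{+}_{\beta,2-\beta-\alpha}(1-x)$ recorded in the introduction), which swaps left/right-continuity and reverses the branch conventions while preserving monotonicity.

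The main obstacle I anticipate is the bookkeeping needed to make the ``discontinuities occur only at periodic points'' assertion precise and to pin down exactly which of the three comparison steps above is strict, since both the cut and the evaluation point move simultaneously; one has to be careful that a jump in $q \mapsto \mu^{+}_{\beta,q}(q)$ can occur only when for some $i$ the $i$-th orbit point $(U^{+}_{\beta,q})^{i}(q)$ crosses the moving cut $q$, and to check that such a crossing forces $(U^{+}_{\beta,q})^{i}(q)=q$, i.e.\ $q$ is periodic of period dividing $i{+}1$ under $U^{+}_{\beta,q}$. Since this is stated as a known result with citations (\cite{BHV,COOPERBAND202096}), I would keep the argument brief, relegating the lexicographic-monotonicity lemma to those references and emphasizing only the branch-monotonicity inequality $U^{+}_{\beta,q}(y)\le U^{+}_{\beta,q'}(y)$ and the three-step chain above.
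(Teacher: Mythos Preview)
The paper does not supply a proof of \Cref{tauinc}; it is quoted with citations to \cite{BHV,COOPERBAND202096}. So there is nothing to compare against, and the question reduces to whether your sketch is a valid outline of a proof. It is not: the central monotonicity step is in the wrong direction.

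Your middle inequality $\mu^{+}_{\beta,q}(r)\preceq\mu^{+}_{\beta,q'}(r)$ is false. Take any $r$ with $q<r<q'$: then the first symbol of $\mu^{+}_{\beta,q}(r)$ is $1$ (since $r\geq q$) while the first symbol of $\mu^{+}_{\beta,q'}(r)$ is $0$ (since $r<q'$), so $\mu^{+}_{\beta,q}(r)\succ\mu^{+}_{\beta,q'}(r)$. The underlying orbit inequality $(U^{+}_{\beta,q})^{n}(y)\le(U^{+}_{\beta,q'})^{n}(y)$ that you try to induct on also fails for $n\ge 2$, because $U^{+}_{\beta,q'}$ is not monotone on $[0,1]$ (it drops by $\beta-1$ at the cut), so $a\le b$ does not force $U^{+}_{\beta,q'}(a)\le U^{+}_{\beta,q'}(b)$; an explicit failure occurs e.g.\ for $\beta=3/2$, $q=0.4$, $q'=0.5$, $y=0.45$ at $n=3$. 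Finally, your convention is reversed: the ``$+$'' map assigns symbol $1$ at the cut (the condition is $\geq q$), not symbol $0$.

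A correct argument exploits the fact that the two branches have the \emph{same} slope $\beta$, so as long as $\mu^{+}_{\beta,q}(q)$ and $\mu^{+}_{\beta,q'}(q')$ agree on an initial block of length $n-1$, the corresponding orbit points satisfy $(U^{+}_{\beta,q'})^{i}(q')-(U^{+}_{\beta,q})^{i}(q)=\beta^{i}(q'-q)$ for $i<n$. If at position $n\ge 2$ one had $\omega_{n}=1$ and $\omega'_{n}=0$, then $y_{n-1}\ge q$ and $y'_{n-1}<q'$ force $y'_{n-1}-y_{n-1}<q'-q$, contradicting $y'_{n-1}-y_{n-1}=\beta^{n-1}(q'-q)>q'-q$. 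This gives strict monotonicity. The same computation shows that a symbol $\omega_{i}=1$ is always stable under perturbations of $q$ (the orbit gap grows geometrically while the cut moves linearly), whereas a symbol $\omega_{i}=0$ is stable for $q'\downarrow q$ because $\omega_{i}=0$ entails the strict inequality $y_{i-1}<q$; this yields right-continuity. Left-continuity can fail precisely when some $y_{i-1}=q$, i.e.\ when $q$ is periodic, which is the last clause.
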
 

The main benefit of using uniform Lorenz maps stems from the idea that every $\beta$-trnasformation has a  realisation as a uniform Lorenz map, as discussed above, and that every uniform Lorenz map is defined on $[0, 1]$ and has the same fixed points.  Thus, it allows one to easily compare the kneading invariants of systems with the same expansion rate, namely $\beta$, but with different translates, namely $\alpha$.

\section{Fiber Denseness of intermediate $\beta$-shifts of finite type\\
-- Proof of \Cref{SFTdense} --}\label{sec:Proof_Thm_1}

The aim of this section is to prove \Cref{SFTdense}.  We divide the proof into two parts.  We show that the sets $\P^{\pm}(\beta) \coloneqq \{ \alpha \in \Delta(\beta) \colon \tau_{\beta, \alpha}^{\pm}(p) \; \text{is periodic} \}$, for a given $\beta \in (1, 2)$, are dense in $\Delta(\beta)$ with respect to the Euclidean norm, and with the help of \Cref{+iff-}, we have that $\tau_{\beta_{k}, \alpha}^{+}$ is periodic if and only if $\tau_{\beta_{k}, \alpha}^{-}$ is periodic.  \Cref{SFTdense} follows by combining these two results together with \Cref{Parry:GL}.

\begin{proof}[Proof of \Cref{SFTdense}]

Fix $(\beta, \alpha) \in \Delta$ with $\alpha \not\in \{ 0, 2-\beta\}$.  Let $q = 1 + (\alpha - 1)/\beta$, so that $U_{\beta, q}^\pm$ is topologically conjugate to $T_{\beta,\alpha}^\pm$.  It is sufficient to show that there exists $q_{s}^{\pm}$ sufficiently close to $q$ in $((\beta-1)/\beta,  1/\beta)$ with $\mu_{\beta, q_{s}^{\pm}}^{\pm} (q_{s}^{\pm})$ periodic. We present the proof for $\mu_{\beta, q_{s}^{-}}^{-} (q_{s}^{-})$; the proof for $\mu_{\beta, q_{s}^{+}}^{+} (q_{s}^{+})$ follows analogously.  To this end, suppose $\mu_{\beta, q}^{-} (q)$ is not periodic, otherwise set $q_{s}^{-} = q$.  Fix $k \in \bN$ and set
\begin{align*}
\delta_k^{(q)} \coloneqq \min \{ \beta^{-k} \lvert (U_{\beta, q}^-)^{l}(q) - q \rvert \colon l \in \{ 1, 2, \dots, k \} \}.
\end{align*}
Observe $\mu_{\beta, q'}^{-}(q')\vert_{k+1} = \mu_{\beta, q}^{-}(q)\vert_{k+1}$, for all $q' \in [q - \delta_{k}^{(q)}, q)$. Let $j >k$ be the maximal integer such that
	\begin{align*}
	 \mu_{\beta, q - \delta_k^{(q)}}^-(q - \delta_k^{(q)})\vert_j = \mu_{\beta, q}^-(q)\vert_j,
	 \quad
	(\mu_{\beta, q - \delta_k^{(q)}}^-(q - \delta_k^{(q)}))_{j+1} = 0 
 	\quad \text{and} \quad
 	(\mu_{\beta,  q}^-(q))_{j+1} = 1.
 	\end{align*}
The existence of $j$ is given by \Cref{tauinc}. Let $A \subseteq [q - \delta_k, q)$ be defined by
 	\begin{align*}
	 A \coloneqq \{ x \in [q - \delta_k^{(q)}, q) \colon  \mu_{\beta,x}^-(x)\vert_j = \mu_{\beta,q}^-(q)\vert_j \; \text{and} \; (\mu_{\beta,x}^-(x))_{j+1} = 0 \}.
	\end{align*}
\Cref{tauinc} ensures that $A$ is a non-empty, connected and closed, in particular that $q_{s}^{-} \coloneqq \sup(A) \in A$.  By way of contradiction, suppose that $\mu_{\beta, q_{s}^{-}}^{-} (q_{s}^{-})$ is not periodic, in which case,
	\begin{align*}
	(U_{\beta, q_s^{-}}^{-})^j(q_{s}^{-}) < q_{s}^{-}.
 	\end{align*} 
By \Cref{tauinc}, there exists $\delta_{j+1}^{(q_{s})} > 0$ so that, if $q' \in (q_s, q_s+ \delta_{j+1}^{(q_s)})$, then $\mu_{\beta, q_{s}^{-}}^-(q_{s}^{-})\vert_{j+1} = \mu_{\beta,q'}^-(q')\vert_{j+1}$.  This implies $q' \in A$; contradicting the fact $q_s^{-}$ is the supremum of $A$.  Therefore, since $\delta_{k}^{(q)} < \beta^{-k}$, given $q \in (1 - 1/\beta, 1/\beta)$ and $\epsilon > 0$, there exist $q_{s}^{-} \in (1 - 1/\beta, q)$ and $k \in \bN$ with $q - q^{-}_{s} \leq \delta_{k}^{(q)} < \epsilon$ and $\mu_{\beta, q^{-}_s}^{\pm} (q^{-}_s)$ periodic.

\Cref{+iff-} implies that $\P^{+}(\beta_{n}) = \P^{-}(\beta_{n})$, for all integers $n \geq 2$.  Thus, we have that the set $\{ \alpha \in \Delta(\beta) \colon \tau_{\beta, \alpha}^{+}(p) \; \text{and} \; \tau_{\beta, \alpha}^{-}(p) \; \text{are periodic} \}$ is dense in $\Delta(\beta)$ with respect to the Euclidean norm.  With this at hand, an application of \Cref{Parry:GL} completes the proof.
\end{proof}

\section{Periodic expansions of Pisot and Salem numbers\\
-- Proof of \Cref{thm:thmB} -- }\label{sec:Proof_Thm_2_&_3}

Throughout this section, let $\beta \in (1,2)$ denote an algebraic integer with minimal polynomial 
	\begin{align*}
	P(z) \coloneqq \sum_{i = 0}^{d-1} a_{i} z^{i} + z^{d},
	\end{align*}
where $z \in \bC$, $d \in \bN$ and $a_1, a_{2}, \dots, a_{d} \in \bZ$.  In which case, $x \in \bQ(\beta) \cap J_{\beta, \alpha}$ can be written in the form 
	\begin{align} \label{eqn:algebraicexp}
	x = q^{-1} \sum_{i = 0}^{d-1} p_i \beta^i,
	\end{align}
where $p_1, p_{2}, \dots, p_{d-1} \in \bZ$ and $q \in \bN$.  We assume that the integer $q$ in \eqref{eqn:algebraicexp} is as small as possible yielding a unique representation for $x$.  Let $\widehat{p}_{1}, \widehat{p}_{2}, \dots, \widehat{p}_{d-1} \in \bZ$ and and $\widehat{q} \in \bN$ denote the corresponding terms for $\alpha \in \bQ(\beta)$:
	\begin{align} \label{eqn:algebraicexp2}
	\alpha = \widehat{q}^{\,-1} \sum_{i = 0}^{d-1} \widehat{p}_i \beta^i.
	\end{align}
Fix $\alpha \in \bQ(\beta) \cap (0, 2-\beta)$ and $x \in \bQ(\beta) \cap J_{\beta, \alpha}$ with the forms given in \eqref{eqn:algebraicexp} and \eqref{eqn:algebraicexp2}. For $i \in \bN$, let $\omega_{i}^{\pm}(x)$ respectively denote the $i$-th letter of $\tau_{\beta, \alpha}^\pm (x)$.  From the commutative diagram given in \eqref{diag:commutative2}, we have, for $n$ a non-negative integer, that
\begin{align} \label{eqn:rho}
\rho^{(n,\pm)}(x) \coloneqq \beta^n\left(x - \sum_{i=1}^{n}\omega_i^\pm(x) \beta^{-i} + \alpha \sum_{i = 1}^{n} \beta^{-i}\right) = (T_{\beta, \alpha}^\pm)^n(x).
\end{align}

\begin{lemma} \label{lemma1}
For $x \in \bQ(\beta) \cap J_{\beta, \alpha}$ and $n \in \bN_{0}$, there exists a unique vector $(r_1^{(n, \pm)}(x), \dots, r_d^{(n, \pm)}(x))$ in $\bZ^d$ with 
	\begin{align} \label{eqn:rhoiterate}
	\rho^{(n, \pm)}(x) = (\widehat{q}q)^{-1} \sum_{i=1}^{d} r_i^{(n, \pm)}(x) \beta^{-i}.
	\end{align}
\textnormal{For ease of notation, and when the dependency on the point $x$ is clear, we write $\mathbf{r}^{(n, \pm)} = (r_1^{(n, \pm)}, \dots, r_d^{(n, \pm)})$ in replace of $\mathbf{r}^{(n, \pm)}(x) = (r_1^{(n, \pm)}(x), \dots, r_d^{(n, \pm)}(x))$.}
\end{lemma}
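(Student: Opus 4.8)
The plan is to show that $\rho^{(n,\pm)}(x)$ always lies in the $\bZ$-module $(\widehat{q}q)^{-1}\bZ[\beta]$, and then to observe that every element of $\bZ[\beta]$ has a unique representation as an integer combination of the negative powers $\beta^{-1},\dots,\beta^{-d}$; combining the two gives both existence and uniqueness of $\mathbf{r}^{(n,\pm)}(x)$.

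First I would establish $\rho^{(n,\pm)}(x)\in(\widehat{q}q)^{-1}\bZ[\beta]$. Distributing $\beta^{n}$ in \eqref{eqn:rho} gives
\[
\rho^{(n,\pm)}(x)=\beta^{n}x-\sum_{i=1}^{n}\omega_i^{\pm}(x)\,\beta^{\,n-i}+\alpha\sum_{i=1}^{n}\beta^{\,n-i}.
\]
Since $\beta$ is an algebraic integer whose monic minimal polynomial $P$ has degree $d$, the ring $\bZ[\beta]$ is a free $\bZ$-module with basis $1,\beta,\dots,\beta^{d-1}$ (higher powers being reduced via $\beta^{d}=-\sum_{i=0}^{d-1}a_i\beta^{i}$), and in particular $\beta^{j}\in\bZ[\beta]$ for every $j\geq0$. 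Using \eqref{eqn:algebraicexp} and \eqref{eqn:algebraicexp2}, the first term lies in $q^{-1}\bZ[\beta]$, the middle sum lies in $\bZ[\beta]$ because all exponents are non-negative and the $\omega_i^{\pm}(x)$ are integers, and the last term lies in $\widehat{q}^{\,-1}\bZ[\beta]$; hence their sum lies in $(\widehat{q}q)^{-1}\bZ[\beta]$. (Alternatively, $T_{\beta,\alpha}^{\pm}$ maps $(\widehat{q}q)^{-1}\bZ[\beta]$ into itself, since $\beta\cdot\bZ[\beta]\subseteq\bZ[\beta]$, $\alpha\in\widehat{q}^{\,-1}\bZ[\beta]$ and $1\in\bZ[\beta]$, so the claim follows by induction on $n$ from $x\in q^{-1}\bZ[\beta]$.)

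Second I would show that each $z\in\bZ[\beta]$ can be written uniquely as $z=\sum_{i=1}^{d}c_i\beta^{-i}$ with $c_i\in\bZ$. For existence, $\beta^{d}z\in\bZ[\beta]$, so $\beta^{d}z=\sum_{j=0}^{d-1}c_{d-j}\beta^{j}$ for suitable integers $c_1,\dots,c_d$, and dividing by $\beta^{d}$ gives the claimed form. For uniqueness, multiplying a relation $\sum_{i=1}^{d}c_i\beta^{-i}=0$ by $\beta^{d}$ yields a $\bQ$-linear dependence among $1,\beta,\dots,\beta^{d-1}$, which is impossible unless every $c_i=0$, since $[\bQ(\beta):\bQ]=d$. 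Applying this to $z=\widehat{q}q\,\rho^{(n,\pm)}(x)\in\bZ[\beta]$ produces the unique vector $\mathbf{r}^{(n,\pm)}(x)=(r_1^{(n,\pm)}(x),\dots,r_d^{(n,\pm)}(x))\in\bZ^{d}$ satisfying \eqref{eqn:rhoiterate}.

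I do not expect a genuine obstacle here: the statement is bookkeeping inside $\bZ[\beta]$. The one point worth stating carefully is the passage from a representation in the non-negative powers $1,\beta,\dots,\beta^{d-1}$ to one in the negative powers $\beta^{-1},\dots,\beta^{-d}$. This is available because $\beta^{d}\in\bZ[\beta]$, so $\beta^{d}\bZ[\beta]\subseteq\bZ[\beta]$, equivalently $\bZ[\beta]\subseteq\beta^{-d}\bZ[\beta]$ — which is exactly existence; note we do \emph{not} claim $\beta^{-d}\in\bZ[\beta]$, which would require $\beta$ to be a unit in $\bZ[\beta]$. Uniqueness is then immediate from $\beta$ having degree exactly $d$.
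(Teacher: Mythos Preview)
Your argument is correct and follows essentially the same route as the paper: show that $\widehat{q}q\,\rho^{(n,\pm)}(x)\in\bZ[\beta]$ (the paper does this for $n=1$ explicitly and then inducts, you give both the direct expansion and the inductive version), then invoke that $\{\beta^{-1},\dots,\beta^{-d}\}$ is a $\bQ$-basis of $\bQ(\beta)$ whose integer span contains $\bZ[\beta]$. Your treatment of the passage from the standard basis $1,\beta,\dots,\beta^{d-1}$ to the negative-power basis via multiplication by $\beta^{d}$ is more explicit than the paper's, which simply cites that $\{\beta,\beta^{2},\dots,\beta^{d}\}$ is a basis; this extra care is harmless and arguably clarifies why the coefficients remain integers.
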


\begin{proof}
By \eqref{eqn:algebraicexp}, \eqref{eqn:algebraicexp2} and \eqref{eqn:rho} we have that
	\begin{align*}
	\rho^{(1,\pm)}(x)
	= q^{-1} \sum_{i = 1}^{d} p_{i - 1} \beta^{i} - \omega^{\pm}_{1}(x) + \alpha
	=  (\widehat{q}q)^{-1} \left( \widehat{q} \sum_{i = 1}^{d} p_{i - 1} \beta^{i} - \widehat{q}q\omega^{\pm}_{1}(x) + q\sum_{i = 0}^{d-1} \widehat{p}_i \beta^i \right).
	\end{align*}
The result for $n = 1$ follows from the fact that $q$ and $\widehat{q}$ are fixed and that $B \coloneqq \{\beta, \beta^2, \dots, \beta^{d}\}$ is a basis for $\bQ(\beta)$.  An inductive argument yields the general result.
\end{proof} 

\begin{lemma} \label{lemma2}
For $x \in \bQ(\beta) \cap J_{\beta, \alpha}$, $n \in \bN_{0}$ and $\gamma$ a Galois conjugate of $\beta$,
\begin{align} \label{eqn:gamma}
\gamma^n\left(q^{-1} \sum_{i = 0}^{d-1} p_i \gamma^i - \sum_{i=1}^{n}\omega_i^{\pm}(x) \gamma^{-i} + \widehat{\alpha} \sum_{i = 1}^{n} \gamma^{-i} \right)  = (\widehat{q}q)^{-1} \sum_{i=1}^{d} r_i^{(n, \pm)} \gamma^{-i},
\end{align}
where $\widehat{\alpha} = \widehat{q}^{\,-1} \sum_{i = 0}^{d-1} \widehat{p}_i \gamma^i$.  Moreover, if $\lvert \gamma \rvert >1$ and if $x \in \Per^{\pm}(\beta,\alpha) \cap \bQ(\beta)$, then
\begin{align}\label{eqn:pigamma}
q^{-1} \sum_{i = 0}^{d-1} p_i \gamma^i = \frac{\widehat{\alpha}}{1 - \gamma} + \sum_{i = 1}^{\infty} \omega_i^{\pm}(x) \gamma^{-i}.
\end{align}
\end{lemma}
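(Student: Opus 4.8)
The plan is to prove the two displays of \Cref{lemma2} separately, with the first being a bookkeeping consequence of \Cref{lemma1} and the second being where the Pisot/periodicity hypothesis actually enters. First I would establish \eqref{eqn:gamma}. The key observation is that the relation \eqref{eqn:rhoiterate} of \Cref{lemma1} is an identity among elements of $\bQ(\beta)$ expressed in the basis $B = \{\beta, \beta^2, \dots, \beta^d\}$; by taking its image under the field isomorphism $\bQ(\beta) \to \bQ(\gamma)$ sending $\beta \mapsto \gamma$ (which exists since $\gamma$ is a Galois conjugate of $\beta$, so $\beta$ and $\gamma$ share the minimal polynomial $P$), the same integer coefficients $r_i^{(n, \pm)}$ reappear with $\beta$ replaced by $\gamma$ throughout. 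Applying this isomorphism to both sides of \eqref{eqn:rho} combined with \eqref{eqn:rhoiterate}, and noting that $\alpha = \widehat{q}^{\,-1}\sum \widehat{p}_i\beta^i$ maps to $\widehat\alpha = \widehat{q}^{\,-1}\sum \widehat{p}_i\gamma^i$, while the rational $q^{-1}\sum p_i\beta^i$ maps to $q^{-1}\sum p_i\gamma^i$ and the integer letters $\omega_i^{\pm}(x)$ are fixed, yields exactly \eqref{eqn:gamma}. The only subtlety to flag is that the left-hand side of \eqref{eqn:rho} equals $(T_{\beta,\alpha}^{\pm})^n(x)$, an identity living in $\bR$; but \eqref{eqn:rhoiterate} re-expresses this same real number as an element of $\bQ(\beta)$ in the basis $B$, and it is that algebraic identity, not the dynamical one, that we transport under the isomorphism.

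Next I would deduce \eqref{eqn:pigamma} under the extra hypotheses $\lvert\gamma\rvert > 1$ and $x \in \Per^{\pm}(\beta,\alpha)\cap\bQ(\beta)$. Rearrange \eqref{eqn:gamma} as
\begin{align*}
q^{-1}\sum_{i=0}^{d-1} p_i\gamma^i - \sum_{i=1}^{n}\omega_i^{\pm}(x)\gamma^{-i} + \widehat\alpha\sum_{i=1}^{n}\gamma^{-i} = (\widehat{q}q)^{-1}\gamma^{-n}\sum_{i=1}^{d} r_i^{(n,\pm)}\gamma^{-i}.
\end{align*}
The point of the periodicity hypothesis is that the sequence of iterates $\{(T_{\beta,\alpha}^{\pm})^n(x)\}_{n\in\bN_0}$ is finite, hence by \eqref{eqn:rhoiterate} the integer vectors $\mathbf r^{(n,\pm)}$ take only finitely many values as $n$ ranges over $\bN_0$, so in particular the quantities $\bigl\lvert\sum_{i=1}^d r_i^{(n,\pm)}\gamma^{-i}\bigr\rvert$ are bounded uniformly in $n$ by some constant $C$. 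Since $\lvert\gamma\rvert > 1$ we have $\lvert\gamma\rvert^{-n}\to 0$, so the right-hand side above tends to $0$ as $n\to\infty$. Moreover $\widehat\alpha\sum_{i=1}^{n}\gamma^{-i}\to \widehat\alpha\gamma^{-1}/(1-\gamma^{-1}) = \widehat\alpha/(\gamma - 1)$; being careful with signs, rearranging gives $q^{-1}\sum p_i\gamma^i = \widehat\alpha/(1-\gamma) + \sum_{i=1}^\infty \omega_i^{\pm}(x)\gamma^{-i}$, where the series on the right converges absolutely because $\lvert\gamma^{-1}\rvert < 1$ and the $\omega_i^\pm(x)\in\{0,1\}$. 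This is precisely \eqref{eqn:pigamma}.

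I expect the main obstacle — really the only point requiring care rather than routine manipulation — to be the justification that $\{\mathbf r^{(n,\pm)} : n\in\bN_0\}$ is finite, and more precisely that this follows from $x$ being eventually periodic. The implication runs: eventual periodicity of $x$ under $T_{\beta,\alpha}^{\pm}$ means $\{(T_{\beta,\alpha}^{\pm})^n(x)\}$ is a finite subset of $J_{\beta,\alpha}$; by the uniqueness clause in \Cref{lemma1}, the map $n \mapsto \mathbf r^{(n,\pm)}$ factors through $n \mapsto (T_{\beta,\alpha}^{\pm})^n(x)$, hence has finite image. One should also note a compatibility check worth a sentence: eventual periodicity of the orbit of $x$ is equivalent to eventual periodicity of the word $\tau_{\beta,\alpha}^{\pm}(x)$ via the conjugacy \eqref{diag:commutative2}, but the argument above only uses finiteness of the orbit, which is the cleaner hypothesis to invoke. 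Everything else — the transport under the Galois isomorphism and the geometric-series limit — is mechanical.
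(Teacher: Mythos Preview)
Your proposal is correct and follows essentially the same route as the paper: for \eqref{eqn:gamma} the paper clears denominators to obtain an integer-coefficient polynomial identity satisfied by $\beta$ and then observes that any Galois conjugate $\gamma$ satisfies it as well, which is exactly your ``transport under the field isomorphism $\bQ(\beta)\to\bQ(\gamma)$'' phrased slightly more concretely; for \eqref{eqn:pigamma} the paper argues, as you do, that eventual periodicity forces $\{\mathbf r^{(n,\pm)}\}$ to be finite (hence bounded), so dividing \eqref{eqn:gamma} by $\gamma^{n}$ and letting $n\to\infty$ gives the result.
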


\begin{proof}
Combining \eqref{eqn:algebraicexp}, \eqref{eqn:rho} and \eqref{eqn:rhoiterate}, we obtain that $\beta$ satisfies the polynomial equation
\begin{align} \label{eqn:gamma1}
z^{n+d}\left( \widehat{q} \sum_{i = 0}^{d-1} p_i z^i - \widehat{q}q \sum_{i=1}^{n}\omega_i^{\pm}(x) z^{-i} + q \left(\sum_{j = 0}^{d-1} \widehat{p}_j z^i \right) \left( \sum_{i = 1}^{n} z^{-i} \right)\right)  = \sum_{i=1}^{d} r_i^{(n, \pm)} z^{d-i}
\end{align}
for all $n \geq 0$. Since $\gamma$ is a Galois conjugate of $\beta$, it is also a solution to \eqref{eqn:gamma1}, which proves \eqref{eqn:gamma}. If $\lvert \gamma \rvert >1$ and if $x \in \Per^{\pm}(\beta, \alpha)$, then the cardinality of the set $\{ \mathbf{r}^{(n, \pm)} \colon n \in \mathbb{N}_{0} \}$ is finite, and thus 
\begin{align} \label{eqn:boundednorm}
c^{\pm} \coloneqq \sup \{ \max \{ \lvert r_k^{(n, \pm)} \rvert \colon k \in \{ 1, \dots, d \} \} \colon  n \in \bN  \} < \infty.
\end{align}
Combining this with \eqref{eqn:gamma} we obtain
\begin{align*}
\left\lvert q^{-1} \sum_{i=0}^{d-1} p_i \gamma^{i} - \sum_{i=1}^{n}\omega_i^{\pm}(x) \gamma^{-i} + \widehat{\alpha} \sum_{i = 1}^{n} \gamma^{-i} \right\rvert \leq (\widehat{q} q)^{-1} c^{\pm} d \lvert \gamma \rvert^{-n}. 
\end{align*}
Letting $n$ tend to infinity in the above equation yields \eqref{eqn:pigamma}.
\end{proof}

With the above two lemmas at hand we are ready to prove \Cref{thm:thmB}\,\ref{thm:thmB1}. 

\begin{proof}[Proof of \Cref{thm:thmB}\,\ref{thm:thmB1}]
We show the result for $T_{\beta, \alpha}^+$ noting that the proof is analogous for $T_{\beta,\alpha}^-$.  By way of contradiction, suppose there exists a Galois conjugate $\gamma \neq \beta$ of $\beta$ with $\lvert \gamma \rvert > 1$. Let $x \in [\alpha,\beta + \alpha -1]$ and let $a, b \in J_{\beta, \alpha}$ be such that $a < b$ and $T_{\beta, \alpha}^{+}(a) = T_{\beta, \alpha}^{+}(b) = x$. Set $\delta \coloneqq \lvert \beta^{-1} - \gamma^{-1} \rvert$ and let $\eta \coloneqq \max\{\beta^{-1}, \lvert \gamma \rvert^{-1}\}$. Choose $m \in \bN$ with $\eta^{m+1}/(1 - \eta) < \delta/2$.

Let $a',b' \in \bQ \cap J_{\beta, \alpha}$ with $\tau_{\beta,\alpha}^+(a)\vert_m = \tau_{\beta,\alpha}^+(a')\vert_m$ and $\tau_{\beta,\alpha}^+(b)\vert_m=\tau_{\beta,\alpha}^+(b')\vert_m$; the existence of $a'$ and $b'$ is guaranteed by \Cref{tauinc}.  By \eqref{diag:commutative2} and how $a'$ and $b'$ have been chosen, $(\omega^{+}_2(a'), ..., \omega^{+}_m(a')) = (\omega^{+}_2(b'), ..., \omega^{+}_m(b'))$, $\omega^{+}_1(a') = 0$ and  $\omega^{+}_1(b') = 1$.  An application of \Cref{lemma2} in tandem with \eqref{eqn:algebraicexp}, our hypothesis and the fact that $\gamma$ is a Galois conjugate of $\beta$, yields the  following.
	\begin{align*}
	a' &= \frac{\alpha}{1 - \beta} + \sum_{i=1}^{\infty} \omega^{+}_i(a') \beta^{-i} = \frac{\widehat{\alpha}}{1 - \gamma} + \sum_{i=1}^{\infty} \omega^{+}_i(a') \gamma^{-i}\\
	b' &= \frac{\alpha}{1 - \beta} + \sum_{i=1}^{\infty} \omega^{+}_i(b') \beta^{-i} = \frac{\widehat{\alpha}}{1 - \gamma} + \sum_{i=1}^{\infty} \omega^{+}_i(b') \gamma^{-i}
	\end{align*}
From this we obtain the following chain of inequalities.
\begin{align*}
\begin{aligned}
\delta = \lvert\beta^{-1} - \gamma^{-1}\rvert
&= \left\lvert\frac{\alpha}{1 - \beta} + \sum_{i=2}^{\infty} \omega^{+}_i(b') \beta^{-i} -  \frac{\widehat{\alpha}}{1 - \gamma} - \sum_{i=2}^{\infty}\omega^{+}_i(b') \gamma^{-i}\right\rvert \\
&\leq \left\lvert\frac{\alpha}{1 - \beta} + \sum_{i=2}^{\infty} \omega^{+}_i(b') \beta^{-i} -a'\right\rvert + \left\lvert a' -  \frac{\widehat{\alpha}}{1 - \gamma} - \sum_{i=2}^{\infty}\omega^{+}_i(b') \gamma^{-i}\right\rvert \\
&\leq \left\lvert \sum_{i=2}^{\infty} \omega^{+}_i(b') \beta^{-i} - \sum_{i=1}^{\infty} \omega^{+}_i(a') \beta^{-i}\right\rvert + \left\lvert \sum_{i=1}^{\infty} \omega^{+}_i(a') \gamma^{-i} - \sum_{i=2}^{\infty}\omega^{+}_i(b') \gamma^{-i}\right\rvert \\
&\leq \sum_{i = m+1}^{\infty} \lvert\omega^{+}_i(b') - \omega^{+}_i(a')\rvert\beta^{-i} + \sum_{i = m+1}^{\infty} \lvert\omega^{+}_i(b') - \omega^{+}_i(a')\rvert \lvert \gamma\rvert^{-i}
\leq 2 \eta^{m+1}(1 - \eta)^{-1} < \delta
\end{aligned}
\end{align*}
This yields a contradiction, and concludes the proof.
\end{proof}

For the proof of \Cref{thm:thmB}\,\ref{thm:thmB2} we require an additional lemma.

\begin{lemma} \label{lemma:tfae}
Set $\beta = \gamma_1$ and let $\gamma_2, \dots, \gamma_d$ denote the Galois conjugates of $\beta$.  For $x \in \bQ(\beta) \cap J_{\beta, \alpha}$, $n \in \bN_{0}$ and $ i \in \{ 1,2,\dots,d\}$ set
	\begin{align} \label{eqn:rho_i}
	\rho_i^{(n, \pm)}(x) \coloneqq q^{-1} \sum_{k=1}^{d} r_k^{(n, \pm)}(x) \gamma_i^{-k}.
	\end{align}
The following are equivalent.
	\begin{enumerate}[label={(\roman{enumi})}]
	\item \label{eqn:tfae1} $x \in \Per^{\pm}(\beta,\alpha)$
	\item \label{eqn:tfae2} $\displaystyle \max \{ \sup \{ \lvert\rho_i^{(n, \pm)}(x)\rvert \colon n \in \bN_{0} \} \colon i \in  \{ 1, \dots, d \}\} < \infty$
	\item \label{eqn:tfae3} $\displaystyle \sup \{ \max \{ \lvert r_k^{(n, \pm)}(x) \rvert \colon k \in \{1, \dots, d \} \} \colon n \in \bN_{0} \}  < \infty$
	\end{enumerate}
\end{lemma}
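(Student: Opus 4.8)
The plan is to establish the two honest equivalences (i) $\Leftrightarrow$ (iii) and (ii) $\Leftrightarrow$ (iii); the first is essentially a restatement of \Cref{lemma1}, and the second is an invertible (generalised Vandermonde) change of coordinates. Throughout, I would use the identity $\rho^{(n,\pm)}(x) = (T_{\beta,\alpha}^\pm)^n(x)$ from \eqref{eqn:rho}, so that $\rho_1^{(n,\pm)}(x)$ equals $(T_{\beta,\alpha}^\pm)^n(x)$ up to the fixed positive factor $\widehat q$; in particular $x \in \Per^\pm(\beta,\alpha)$ if and only if $\{\rho^{(n,\pm)}(x) : n \in \bN_0\}$ is finite.

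For (i) $\Leftrightarrow$ (iii): by \Cref{lemma1}, for each $n$ there is a \emph{unique} $\mathbf r^{(n,\pm)}(x) \in \bZ^d$ with $\rho^{(n,\pm)}(x) = (\widehat q q)^{-1}\sum_{i=1}^d r_i^{(n,\pm)}(x)\beta^{-i}$. Since multiplication by $\beta^{-d}$ is a $\bQ$-linear automorphism of $\bQ(\beta)$ carrying the basis $\{1,\beta,\dots,\beta^{d-1}\}$ to $\{\beta^{-1},\dots,\beta^{-d}\}$, the latter is also a $\bQ$-basis, so the $\bZ$-linear map $\mathbf r \mapsto (\widehat q q)^{-1}\sum_{i=1}^d r_i\beta^{-i}$ is injective. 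Hence $\{\mathbf r^{(n,\pm)}(x) : n \in \bN_0\}$ is finite if and only if $\{\rho^{(n,\pm)}(x) : n \in \bN_0\}$ is finite, i.e. if and only if $x \in \Per^\pm(\beta,\alpha)$. As a subset of $\bZ^d$ is finite precisely when it is bounded in the supremum norm, ``$\{\mathbf r^{(n,\pm)}(x)\}$ finite'' is literally statement (iii); this gives (i) $\Leftrightarrow$ (iii) (and recovers \eqref{eqn:boundednorm}).

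For (ii) $\Leftrightarrow$ (iii): let $\gamma_1 = \beta,\gamma_2,\dots,\gamma_d$ be the roots of the minimal polynomial $P$. Since $\bQ$ has characteristic zero, $P$ is separable, so the $\gamma_i$ are pairwise distinct; and as $P$ is the minimal polynomial of $\beta\ne 0$ its constant term $a_0$ is nonzero, so no $\gamma_i$ vanishes and the numbers $\gamma_1^{-1},\dots,\gamma_d^{-1}$ are $d$ distinct nonzero complex numbers. Collecting \eqref{eqn:rho_i} over $i = 1,\dots,d$ yields the matrix identity $\bigl(\rho_i^{(n,\pm)}(x)\bigr)_{i=1}^d = q^{-1} V \bigl(r_k^{(n,\pm)}(x)\bigr)_{k=1}^d$ with $V = \bigl(\gamma_i^{-k}\bigr)_{i,k=1}^d = \operatorname{diag}(\gamma_1^{-1},\dots,\gamma_d^{-1})\,W$, where $W = \bigl((\gamma_i^{-1})^{k-1}\bigr)_{i,k=1}^d$ is the Vandermonde matrix in the nodes $\gamma_1^{-1},\dots,\gamma_d^{-1}$. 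These nodes being distinct and nonzero, $\det W = \prod_{i<j}(\gamma_j^{-1}-\gamma_i^{-1})\ne 0$ and so $V \in GL_d(\bC)$; thus $\bigl(r_k^{(n,\pm)}(x)\bigr)_k = q\,V^{-1}\bigl(\rho_i^{(n,\pm)}(x)\bigr)_i$. Since $V$ and $V^{-1}$ are fixed matrices, the family $\{(r_k^{(n,\pm)}(x))_k : n \in \bN_0\}$ is bounded if and only if $\{(\rho_i^{(n,\pm)}(x))_i : n \in \bN_0\}$ is; that is, (iii) $\Leftrightarrow$ (ii).

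The only step beyond bookkeeping is the invertibility of $V$, which rests on the two standard number-theoretic facts that the Galois conjugates of $\beta$ are all distinct (separability in characteristic zero) and all nonzero (nonzero constant term of the minimal polynomial). If a cyclic layout reads more cleanly I would instead present (i) $\Rightarrow$ (iii) $\Rightarrow$ (ii) $\Rightarrow$ (i), using for (iii) $\Rightarrow$ (ii) the crude bound $\lvert\rho_i^{(n,\pm)}(x)\rvert \le q^{-1}\bigl(\sup_{n,k}\lvert r_k^{(n,\pm)}(x)\rvert\bigr)\sum_{k=1}^d \lvert\gamma_i\rvert^{-k}$ and for (ii) $\Rightarrow$ (i) the $i=1$ instance together with injectivity of $\mathbf r \mapsto \rho$ from the second paragraph; but the two equivalences above already suffice.
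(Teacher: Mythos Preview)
Your main argument is correct and follows essentially the same route as the paper: the paper proves the cycle (i) $\Rightarrow$ (iii) $\Rightarrow$ (ii) $\Rightarrow$ (i), using precisely the matrix identity $\mathbf v^{(n,\pm)} = M_\beta\,\mathbf r^{(n,\pm)}$ with $M_\beta = (\gamma_i^{-k})_{i,k}$ and invoking its invertibility for the key implication (ii) $\Rightarrow$ (i). Your Vandermonde justification for $\det V \neq 0$ is in fact cleaner than the paper's appeal to transitivity of the Galois action.

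One caveat on your closing parenthetical: the suggested cyclic variant with ``(ii) $\Rightarrow$ (i) via the $i=1$ instance together with injectivity of $\mathbf r \mapsto \rho$'' does not work as stated. The $i=1$ quantity $\rho_1^{(n,\pm)}(x) = \widehat q\,(T_{\beta,\alpha}^\pm)^n(x)$ is \emph{always} bounded (the orbit lives in $J_{\beta,\alpha}$), so it carries no information; and injectivity of $\mathbf r \mapsto \rho^{(n,\pm)}$ alone cannot force $\{\mathbf r^{(n,\pm)}\}$ to be finite without first bounding it. You genuinely need the full invertibility of $V$ (i.e.\ all conjugates $i=1,\dots,d$) to pass from (ii) to boundedness of the integer vectors $\mathbf r^{(n,\pm)}$, which is exactly what both your main argument and the paper do. Since you already note that the two equivalences suffice, this is harmless, but the alternative layout should be dropped or corrected.
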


\begin{proof}
A similar argument to that given in the proof of \Cref{lemma2}, where we obtained \eqref{eqn:boundednorm}, shows \ref{eqn:tfae1} implies \ref{eqn:tfae3}.  That \ref{eqn:tfae3} implies \ref{eqn:tfae2} follows from \eqref{eqn:rho_i}.  To complete the proof we show \ref{eqn:tfae2} implies \ref{eqn:tfae1}.  To this end, assume \ref{eqn:tfae2} and set
	\begin{align} \label{eqn:matrix}
	\mathbf{v}^{(n, \pm)}(x) \coloneqq q 
		\begin{pmatrix}
		\rho_1^{(n, \pm)}(x) \\ \vdots \\ \rho_d^{(n, \pm)}(x)
		\end{pmatrix}
		=
		\underbrace{\begin{pmatrix}
		\gamma_1^{-1} & \gamma_1^{-2} & \dots & \gamma_1^{-d} \\
		\vdots & \vdots & \ddots & \vdots \\
		\gamma_d^{-1} & \gamma_d^{-2} & \dots & \gamma_d^{-d} \\
		\end{pmatrix}}_{\displaystyle\eqqcolon M_{\beta}}
		\begin{pmatrix}
		r_1^{(n, \pm)}(x) \\ \vdots \\ r_d^{(n, \pm)}(x)
		\end{pmatrix}.
	\end{align}
By assumption, there exists $c^{\pm} \in \bR$ with $\lVert \mathbf{v}^{(n, \pm)}(x) \rVert \leq c^{\pm}$, for all $n \in \bN_{0}$.  Since the Galois group of a finite Galois extension acts transitively on the roots of any minimal  polynomial, $M_{\beta}$ is a non-singular matrix.  This implies there exists $k^{\pm} \in \bZ$ with $\lVert \mathbf{r}^{(n, \pm)}(x) \rVert \leq k^{\pm}$, for all $n \in \bN_{0}$. Hence, as $\mathbf{r}^{(n, \pm)}(x) \in \bZ^{d}$ it follows that $\mathbf{r}^{(m, \pm)}(x) = \mathbf{r}^{(n, \pm)}(x)$, and therefore $\rho^{(m, \pm)}_{i} (x) = \rho^{(n, \pm)}_{i}(x)$, for some $m, n \in \bN_{0}$ with $m \neq n$ and all $i \in \{ 1, \dots, d \}$.  An application of \Cref{lemma1} and \eqref{eqn:rho} yields the required result.
\end{proof}

\begin{proof}[Proof of \Cref{thm:thmB}\,\ref{thm:thmB2}]
Fix $x \in \bQ(\beta) \cap [0,1]$ with the form given in \eqref{eqn:algebraicexp}.  As in \Cref{lemma:tfae}, set $\gamma_{1} = \beta$ and let $\gamma_2, \dots, \gamma_d$ denote the Galois conjugates of $\beta$.  Since, by assumption, $\beta$ is a Pisot number, it follows $\displaystyle{\eta \coloneqq \max \{ \lvert \gamma_j \rvert \colon j \in \{ 2, \dots, d \} \} <1}$.  For $j \in \{2, 3, \dots, d\}$, let
	\begin{align*}
	\widehat{\alpha}_{j} \coloneqq \widehat{q}^{\,-1} \sum_{i = 0}^{d-1} \widehat{p}_i {\gamma_{j}}\!^i,
	\end{align*}
and set $\widetilde{\alpha} \coloneqq \max \{ \lvert \widehat{\alpha}_{j} \rvert \colon j\in \{2, 3, \dots, d\} \}$.  By \eqref{eqn:gamma1} and \eqref{eqn:rho_i} we have
	\begin{align*}
	\lvert\rho_i^{(n, \pm)}(x)\rvert \leq q^{-1} \sum_{j = 0}^{d-1}\lvert p_j \rvert \eta^{n + j} + \sum_{i = 0}^{n-1} (1 + \widetilde{\alpha})\eta^{n + i}
	\end{align*}
for all $n \in \bN_{0}$ and $i \in \{ 2, \dots, d \}$. This in combination with \eqref{eqn:rho} yields that \Cref{lemma:tfae}\,\ref{eqn:tfae2} is satisfied, and thus $x \in \Per^{\pm}(\beta,\alpha)$.
\end{proof}

\section{Periodic expansions of Pisot and Salem numbers\\
--  Proof of \Cref{cor:sofic,thm:thmC} --}\label{sec:transitivity}

The aim of this final section is to provide an overview of the results of \cite{G:1990,Par:1979,MR0166332,Parry:1979} which in combination  with our results (\Cref{SFTdense,thm:thmB}) yield \Cref{thm:thmC,thm:thmD}.

An interval map $T  \colon [a, b]  \circlearrowleft$ is called \textsl{topologically transitive} if for all open subintervals $J$ there exists $m \in \bN$ with 
	\begin{align*}
	\bigcup_{k = 0}^{m} T^{k}(J) \supseteq (a, b).
	\end{align*}
For $\beta \in  (1, 2)$, Parry \cite{Parry:1979} showed $T_{\beta,1 - \beta/2}^{\pm}$ is topologically transitive if and only if $\beta > \sqrt{2}$.  This result was later generalised by Palmer \cite{Par:1979} and Glendinning \cite{G:1990} who classified the set of points $(\beta, \alpha) \in \Delta$ with $T_{\beta,\alpha}^{\pm}$ is topologically transitive.

In order to state the results of Parry, Palmer and Glendinning we require the following.  Let $n, k \in \mathbb{N}$ with $1 \leq k < n$ and $\operatorname{gcd}(n, k) = 1$, and let $I_{n, k}(\beta)$ be  as in \eqref{eq:Ink}.   Define $D_{n, k}$ to be the set 
	\begin{align*}
	\{ (\beta, \alpha) \in \Delta \colon \beta \in (1, 2^{1/n}] \; \text{and} \; \alpha \in I_{n, k}(\beta) \}, 
	\end{align*}
see \Cref{Fig:ParPlot} for an illustration of the intervals $I_{n, k}$ and the regions $D_{k, n}$.

\begin{figure}[t]
 \includegraphics[width=0.64\textwidth]{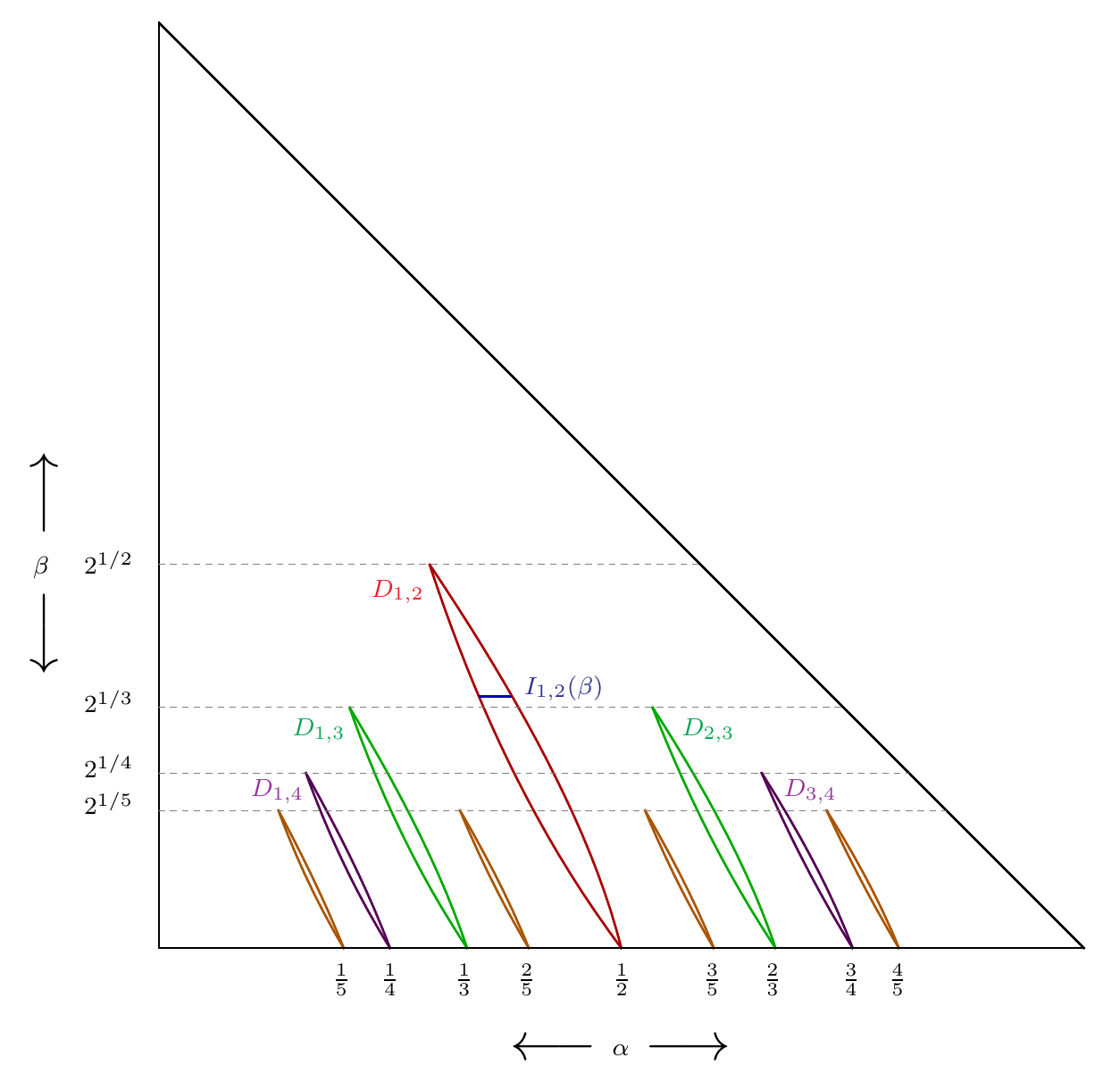}
\vspace{-1em}
	\caption{Plot of the parameter space $\Delta$, together with boundary of the regions $\color{Red}D_{1,2}$, $\color{Green}D_{1,3}$, $\color{Green}D_{2,3}$, $\color{Purple}D_{1,4}$, $\color{Purple}D_{3,4}$, $\color{Orange}D_{1,5}$, $\color{Orange}D_{2,5}$, $\color{Orange}D_{3,5}$, $\color{Orange}D_{4,5}$, $\color{Orange}D_{1,6}$ and $\color{Orange}D_{5,6}$.  Further, in blue, a sketch of the interval $I_{1,2}(\beta)$, where $\beta$ is the square root of the golden mean, is given.}
\label{Fig:ParPlot}
\end{figure}

\begin{theorem}[{\cite{G:1990,Par:1979,Parry:1979}}]\label{thm:Palmer}
Let $(\beta,\alpha) \in \Delta$.  The tuple $(\beta,\alpha) \in D_{k, n}$, for some $n, k \in \bN$ with $1\leq k < n$ and $\mathrm{gcd}(k, n) = 1$, if and only if $T_{\beta, \alpha}^{\pm}$ is not topologically transitive.
\end{theorem}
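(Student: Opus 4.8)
The plan is to deduce \Cref{thm:Palmer} from the renormalisation theory of expanding Lorenz maps, following Parry, Palmer and Glendinning. First I would transfer the problem off the $\beta$-transformations: by the conjugacy recorded in \Cref{sec_uniform}, $T_{\beta,\alpha}^{\pm}$ is conjugate to $U_{\beta,q}^{\pm}$ with $q=1+(\alpha-1)/\beta$, and the trapping region $[0,1]$ of $T_{\beta,\alpha}^{\pm}$ is carried onto the invariant interval $[\alpha,\beta+\alpha-1]$, which is the ``core'' of $U_{\beta,q}^{\pm}$ and carries all non-wandering dynamics (outside it the only invariant points are the repelling fixed points $0$ and $1$). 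Rescaling this interval affinely to $[0,1]$ turns the map into a \emph{maximal} expanding Lorenz map $f=f_{\beta,\alpha}^{\pm}$, of constant slope $\beta$ and with $f(q^{-})=1$, $f(q^{+})=0$, whose topological transitivity is equivalent to that of $T_{\beta,\alpha}^{\pm}$. The extreme fibres $\alpha\in\{0,2-\beta\}$ are the greedy and lazy maps, which are exact (hence transitive) and, since $I_{n,k}(\beta)\subset(0,2-\beta)$, lie in no $D_{n,k}$; so I may assume $\alpha\in(0,2-\beta)$, in which case $f$ has no fixed point in $[0,1]$ and behaves combinatorially like a rotation, with the density of $1$'s in the kneading sequences $\tau_{\beta,\alpha}^{\pm}(p)$ playing the role of a rotation number $\varrho=\varrho(\beta,\alpha)$, continuous in $(\beta,\alpha)$ and, for fixed $\beta$, non-decreasing in $\alpha$ from $0$ at $\alpha=0$ to $1$ at $\alpha=2-\beta$.

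The heart of the argument is the identification, essentially due to Glendinning, of the non-transitive parameters with the \emph{renormalisable} ones. I would say $f$ is renormalisable of type $(n,k)$, with $n\ge 2$, $1\le k<n$, $\gcd(n,k)=1$, if there is a closed interval $K\ni q$ such that $f^{n}$ maps $K$ onto $K$ as a maximal Lorenz map, $q\notin f^{j}(K)$ for $1\le j\le n-1$, the intervals $K,f(K),\dots,f^{n-1}(K)$ are arranged along $[0,1]$ in the order determined by the rotation number $k/n$, and the forward-invariant set $V\coloneqq\bigcup_{j=0}^{n-1}f^{j}(K)$ is a proper subset of $[0,1]$. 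Since $f^{n}|_{K}$, rescaled, is a maximal expanding Lorenz map of slope $\beta^{n}$, that slope is constrained by $\beta^{n}\le 2$, i.e.\ $\beta\le\sqrt[n]{2}$; and when $\beta=\sqrt[n]{2}$ the renormalised map is forced to be the doubling map, pinning down $K$ and hence $\alpha$ uniquely, which is why $I_{n,k}(\sqrt[n]{2})$ degenerates to a point. To locate $I_{n,k}(\beta)$ in general one pushes the defining conditions through the projection $\pi_{\beta,\alpha}$ of \eqref{diag:commutative2}: they force $\tau_{\beta,\alpha}^{+}(p)$ and $\tau_{\beta,\alpha}^{-}(p)$ to be, up to an initial block, the upper and lower Sturmian words of slope $k/n$, whose $\pi_{\beta,\alpha}$-value involves exactly the sum $\sum_{j=1}^{s}W_{j}$ assembled from the integers $V_{j},r_{j},h_{j}$ in \eqref{eq:wj}; equating these values with $p$ and solving for $\alpha$ at the two ends returns precisely the endpoints of the closed interval $I_{n,k}(\beta)$ in \eqref{eq:Ink}, and the monotonicity in \Cref{tauinc} shows the locked set $\{\varrho=k/n\}$ is exactly this interval and that its endpoints are attained.

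Finally I would close the loop between renormalisability and transitivity. For the ``if'' direction this is immediate: if $f$ is renormalisable then $f(V)\subseteq V\subsetneq[0,1]$, so no forward orbit is dense, and $T_{\beta,\alpha}^{\pm}$ is not topologically transitive; hence $(\beta,\alpha)\in D_{n,k}$ implies non-transitivity. For the converse I would show that a \emph{non}-renormalisable $f$ is locally eventually onto, i.e.\ the forward orbit of every open interval covers $[0,1]$ in finitely many steps — exactly the notion of transitivity in the statement: because $f$ expands by the factor $\beta>1$, an open interval $J$ whose iterates never met the discontinuity $q$ would have images of length at least $\beta^{n}|J|\to\infty$, which is absurd, so every interval eventually straddles $q$, and the absence of any renormalisation prevents the growing images from being trapped, so a short bootstrap shows they exhaust $[0,1]$; equivalently, $\varrho$ irrational forces $f$ to be conjugate (not merely semiconjugate) to the rotation $R_{\varrho}$, the collapsing being trivial because expanding maps have no wandering intervals, hence $f$ is minimal. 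Specialising to the symmetric slice $\alpha=1-\beta/2$, where the only type that can occur is $(n,k)=(2,1)$ with $\beta\le\sqrt2$, recovers Parry's threshold. I expect the two genuinely delicate points to be the combinatorial bookkeeping behind \eqref{eq:Ink}--\eqref{eq:wj} (correctly identifying the Sturmian word of slope $k/n$ and evaluating its $\beta$-expansion to obtain the stated closed-form endpoints) and the rigorous exclusion of wandering behaviour in the non-renormalisable case, which is precisely where the expansion $\beta>1$ is used.
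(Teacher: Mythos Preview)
The paper does not prove \Cref{thm:Palmer}; it is quoted from Glendinning, Palmer and Parry, and is followed only by a one-paragraph description of the ``main ingredient'', namely the bijective renormalisation correspondence between $\Delta$ and each $D_{n,k}$ via $\alpha\mapsto\alpha_{n,k}(\beta,\alpha)$, together with the disjointness (or single-point overlap when $\beta=2^{1/n}$) of the $n$ iterated images of the core interval in \eqref{eq:intersection_transitive_empty}. Your outline is built around precisely this renormalisation picture and fills in the kneading/Sturmian bookkeeping that the paper suppresses, so in spirit it matches both the paper's summary and the cited sources.

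One step in your sketch is genuinely wrong and should be dropped rather than leaned on. An expanding Lorenz map of constant slope $\beta>1$ has topological entropy $\log\beta>0$ and plenty of periodic points, so it can never be topologically conjugate to an irrational rotation and is certainly not minimal. Thus the clause ``$\varrho$ irrational forces $f$ to be conjugate (not merely semiconjugate) to the rotation $R_{\varrho}$, \dots\ hence $f$ is minimal'' fails on entropy grounds and cannot serve as an alternative route to transitivity; the Denjoy-type ``no wandering intervals'' reasoning you invoke upgrades semiconjugacies for circle \emph{homeomorphisms}, not for non-invertible expanding maps. The correct argument for the non-renormalisable direction is the one you state just before that clause: uniform expansion forces every open interval eventually to straddle the discontinuity, and the absence of a proper invariant cycle of subintervals then yields the locally-eventually-onto property. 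Keep that line of reasoning and excise the rotation-conjugacy sentence.
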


A main ingredient in the proof of this result is to show that for given $n, k \in \mathbb{N}$ with $1 \leq k < n$ and $\operatorname{gcd}(n, k) = 1$, there exists a one-to-one correspondence between points in $\Delta$ and points in $D_{n, k}$.  More precisely, on the one hand, given $(\beta, \alpha) \in \Delta$, there exists a unique $a \in I_{n, k}(\sqrt[n]{\beta})$, namely $a = \alpha_{n, k}(\beta, \alpha)$, see \Cref{thm:thmD}, such that,  $T^{\pm}_{\beta, \alpha}\vert_{[0, 1]}$ and  $(T^{\pm}_{\sqrt[n]{\beta}, a})^{n}\vert_{[a, \sqrt[n]{\beta} + a - 1]}$ are topologically conjugate with conjugating map $\Phi(x) \coloneqq (\sqrt[n]{\beta} - 1)x + a$; on the other hand, given $(\beta, \alpha) \in D_{n, k}$, there exists $a \in [0, 2 - \beta^{n}]$, namely 
	\begin{align*}
	a = \begin{cases}
	\displaystyle 1 - \frac{-\alpha(\beta^{n} - 1) + \beta - 1 }{(\beta - 1)(1 - \beta^{-1})} & \text{if} \; k = 1,\\[1em]
	\displaystyle 1 - \frac{-\alpha(\beta^{n} - 1) + (\beta - 1) \sum_{j = 1}^{s} W_{j} }{(\beta - 1)(1 - \beta^{-1})} & \text{otherwise}. 
	\end{cases}
	\end{align*}
such that $(T^{\pm}_{\beta, \alpha})^{n}\vert_{[\alpha, \beta+\alpha-1]}$ and  $T^{\pm}_{\beta^{n}, a}$ are topologically conjugate, where the conjugating map is given by $x \mapsto (\beta - 1)^{-1}(x - \alpha)$ and where $s \in \{ 0, 1, \dots, k-1\}$ satisfies $n = s \bmod{k}$ and $W_{j}$ is as defined in \eqref{eq:wj}.  Moreover, in the case that $\beta \neq 2^{1/n}$ and $(\beta, \alpha) \in D_{n, k}$ 
	\begin{align}\label{eq:intersection_transitive_empty}
	\overline{(T^{\pm}_{\beta, \alpha})^{i}([0,1])} \cap \overline{(T^{\pm}_{\beta, \alpha})^{j}([0,1])} = \emptyset,
	\end{align}
for all $i, j \in \{ 1, 2, \dots, n\}$ with $i \neq j$; in the case that $\beta \neq 2^{1/n}$ and $\alpha$ is the singleton in $I_{n,k}(\beta)$ the intersection in \eqref{eq:intersection_transitive_empty} is a singleton when $n \neq 2$ and a two point set when $n = 2$.  These observations in tandem with \Cref{SFTdense} and \Cref{cor:sofic} directly yield \Cref{thm:thmC}.  In order to prove \Cref{thm:thmD}, we require one final result. 

\begin{theorem}[\cite{Par:1979,MR0166332}]
Let $(\beta, \alpha) \in \Delta$  be fixed.  The absolutely continuous measure $\nu_{\beta, \alpha}$ with density
	\begin{align*}
	h_{\beta, \alpha} \coloneqq \sum_{n = 0}^{\infty} \beta^{-n} \left (\mathds{1}_{[0, (T_{\beta,\alpha}^{+})^{n}(1))} - \mathds{1}_{[0, (T_{\beta,\alpha}^{+})^{n}(0))} \right)
	\end{align*}
is invariant under $T^{\pm}$.  Moreover, the support of $\nu_{\beta, \alpha}$ equals $[0, 1]$ and only if $(\beta, \alpha) \not\in D_{n, k}$  or if $\beta = 2^{1/n}$ and $\alpha$ is the single point of $I_{n, k}(2^{1/n})$, for some $n, k  \in \mathbb{N}$ with $k < n$ and $\mathrm{gcd}(k, n) = 1$.  Further, in the case that $\beta \neq 2^{1/n}$ and $(\beta, \alpha) \in D_{n, k}$, the support of $\nu_{\beta, \alpha}$ is contained in the disjoint union of intervals,
	\begin{align*}
	\bigcup_{i = 1}^{n} \overline{(T^{\pm}_{\beta, \alpha})^{i}([0, 1])}.
	\end{align*}
\end{theorem}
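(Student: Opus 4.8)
The statement bundles together three claims --- $T^{\pm}_{\beta,\alpha}$-invariance of $\nu_{\beta,\alpha}$, the characterisation of when its support is all of $[0,1]$, and the localisation of the support in the remaining case --- and the plan is to treat them in that order. Write $T = T^{+}_{\beta,\alpha}$; note first that the series defining $h_{\beta,\alpha}$ converges uniformly (its $n$-th term has supremum norm at most $\beta^{-n}$), so $\nu_{\beta,\alpha}$ is a finite absolutely continuous measure. Let $\mathcal{L}$ denote the Perron--Frobenius operator of $T$, $(\mathcal{L}f)(x) = \beta^{-1}\sum_{T(y)=x}f(y)$, so that $\nu_{\beta,\alpha}$ is $T$-invariant precisely when $\mathcal{L}h_{\beta,\alpha} = h_{\beta,\alpha}$ Lebesgue-almost everywhere. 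The first step is to compute $\mathcal{L}$ on the building blocks $\mathds{1}_{[0,c)}$, $c \in [0,1]$, of the series for $h_{\beta,\alpha}$: working out the two affine branches of $T$, using $\beta p = 1 - \alpha$ and the conventions $\mathds{1}_{[0,c)} \equiv 0$ for $c \leq 0$, $\mathds{1}_{[0,c)} \equiv \mathds{1}_{[0,1)}$ for $c \geq 1$, one obtains, on $[0,1]$,
\begin{align*}
\beta\,\mathcal{L}\big(\mathds{1}_{[0,c)}\big) \;=\; \mathds{1}_{[0,\,T(c))} - \mathds{1}_{[0,\,T(0))} + \varepsilon_{c}\,\mathds{1}_{[0,1)}, \qquad \varepsilon_{c} = 1 \text{ if } c \geq p, \ \varepsilon_{c}=0 \text{ otherwise.}
\end{align*}
Substituting $c = T^{n}(1)$ and $c = T^{n}(0)$ and summing with weights $\beta^{-n}$, the $\mathds{1}_{[0,T(0))}$ terms cancel in pairs, the $\mathds{1}_{[0,T^{n+1}(\cdot))}$ terms re-index to reproduce $h_{\beta,\alpha} - \mathds{1}_{[0,1)}$, and --- since $\varepsilon_{T^{n}(y)}$ equals the $(n+1)$-st digit $\omega^{+}_{n+1}(y)$ of $\tau^{+}_{\beta,\alpha}(y)$ --- the surviving $\varepsilon$-terms sum to $\big(\sum_{j\geq 1}\beta^{-j}(\omega^{+}_{j}(1)-\omega^{+}_{j}(0))\big)\mathds{1}_{[0,1)}$. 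Thus $\mathcal{L}h_{\beta,\alpha}=h_{\beta,\alpha}$ is equivalent to $\sum_{j\geq 1}\beta^{-j}(\omega^{+}_{j}(1)-\omega^{+}_{j}(0))=1$, which is precisely $\pi_{\beta,\alpha}(\tau^{+}_{\beta,\alpha}(1))-\pi_{\beta,\alpha}(\tau^{+}_{\beta,\alpha}(0))=1-0$, and so holds by the commutativity of \eqref{diag:commutative2} at the points $0,1\in J_{\beta,\alpha}$. Non-negativity of $h_{\beta,\alpha}$ is recorded in \cite{MR0386019}, and since $T^{+}_{\beta,\alpha}$ and $T^{-}_{\beta,\alpha}$ coincide off the single point $p$, whose orbit is Lebesgue-null, $\nu_{\beta,\alpha}$ is simultaneously invariant under $T^{\pm}_{\beta,\alpha}$.

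For the support, write $S = \operatorname{supp}(\nu_{\beta,\alpha})$. If $(\beta,\alpha)$ lies in no region $D_{n,k}$, then $T^{\pm}_{\beta,\alpha}$ is topologically transitive on $[0,1]$ by \Cref{thm:Palmer}; together with ergodicity of $\nu_{\beta,\alpha}$ (Hofbauer), transitivity forces $S=[0,1]$ --- this is the criterion, recalled before \Cref{thm:thmD}, that $\operatorname{supp}(\nu_{\beta,\alpha})=[0,1]$ exactly when $T_{\beta,\alpha}$ is topologically transitive. When instead $(\beta,\alpha)\in D_{n,k}$, I would import from \cite{Par:1979,G:1990} the structural picture recalled just before the statement: via the conjugacy between $(T^{\pm}_{\beta,\alpha})^{n}\vert_{[\alpha,\beta+\alpha-1]}$ and $T^{\pm}_{\beta^{n},a}$, the map $T^{\pm}_{\beta,\alpha}$ cyclically permutes the $n$ intervals $\overline{(T^{\pm}_{\beta,\alpha})^{i}([0,1])}$, $i=1,\dots,n$, appearing in the theorem's last display, and $\nu_{\beta,\alpha}$ is supported inside their union --- which is the third claim. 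Finally, by \eqref{eq:intersection_transitive_empty} these $n$ intervals are pairwise disjoint up to finitely many points when $\beta\neq 2^{1/n}$, so their union has Lebesgue measure strictly below $1$ and hence $S\subsetneq[0,1]$; whereas when $\beta=2^{1/n}$ and $\alpha$ is the unique point of $I_{n,k}(2^{1/n})$ the constraint $\beta^{n}=2$ forces $a=0$, the return map $(T^{\pm}_{\beta,\alpha})^{n}$ becomes a conjugate of the doubling map, the $n$ intervals tile $[0,1]$ meeting only at endpoints, and so $S=[0,1]$. Assembling these cases yields the second claim.

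The transfer-operator identity is the only genuinely computational ingredient, and its one subtlety is tracking carefully where each orbit point $(T^{+}_{\beta,\alpha})^{n}(0)$ and $(T^{+}_{\beta,\alpha})^{n}(1)$ sits relative to $p$ and to the endpoints $0$ and $1$ --- this is what produces the $\varepsilon_{c}$ term and dictates when the truncation conventions apply. Everything about the support is imported from \cite{Par:1979,G:1990}; were one to reprove it from scratch, the main obstacle would be the two structural facts it rests on: that an ergodic absolutely continuous invariant measure of a topologically transitive piecewise-linear expanding interval map has full support, and the fine analysis of the regions $D_{n,k}$ --- in particular the genuinely exceptional boundary case $\beta=2^{1/n}$.
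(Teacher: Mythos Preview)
The paper does not prove this theorem at all: it is stated with attribution to \cite{Par:1979,MR0166332} and then used as a black box to derive \Cref{thm:thmD}. So there is no ``paper's own proof'' to compare against.

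That said, your proposal is essentially sound. The transfer-operator computation for invariance is correct and is the classical Parry argument: your key identity $\beta\,\mathcal{L}(\mathds{1}_{[0,c)}) = \mathds{1}_{[0,T(c))} - \mathds{1}_{[0,T(0))} + \varepsilon_{c}\mathds{1}_{[0,1)}$ checks out on both branches, the telescoping is clean, and the residual condition $\sum_{j\ge 1}\beta^{-j}(\omega^{+}_{j}(1)-\omega^{+}_{j}(0))=1$ is indeed just $\pi_{\beta,\alpha}\circ\tau^{+}_{\beta,\alpha}=\mathrm{id}$ evaluated at $0$ and $1$. For the support assertions you are, as you acknowledge, importing the structural results of Palmer and Glendinning rather than proving anything; that is exactly what the paper itself does in the surrounding text, so this is appropriate. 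One small point to be careful about: your sentence ``this is the criterion, recalled before \Cref{thm:thmD}, that $\operatorname{supp}(\nu_{\beta,\alpha})=[0,1]$ exactly when $T_{\beta,\alpha}$ is topologically transitive'' is circular as written, since that criterion is part of the very statement you are proving --- better to argue it directly (an ergodic absolutely continuous invariant measure has support which is closed, forward-invariant, and of positive Lebesgue measure; transitivity then forces it to be all of $[0,1]$), which you in fact sketch in your final paragraph.
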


\Cref{thm:thmD} follows from this result in tandem with the observations directly proceeding it together with \Cref{thm:thmB}.

\bibliographystyle{plain}
\bibliography{fiberdensityref}

\end{document}